\begin{document}
\setlength{\parindent}{2em}

\newtheorem{thm}{Theorem}
\numberwithin{thm}{section}
\newtheorem{lem}{Lemma}
\newtheorem{cor}{Corollary}

\theoremstyle{definition}
\newtheorem{definition}{Definition}

\newtheorem{remark}{Remark}
\numberwithin{lem}{section}
\numberwithin{definition}{section}
\numberwithin{equation}{section}
\numberwithin{cor}{section}

\def\rn{{\mathbb R^{n}}}
\def\lb{\left\lbrace}  \def\rb{\right\rbrace}
\def\m{\mathcal{M}}  \def\e{\text{e}}
\def\d{\displaystyle}
\def\rn{{\mathbb R^n}}  \def\sn{{\mathbb S^{n-1}}}
\def\lv{\left\vert}  \def\rv{\right\vert}  \def\v{\vert}
\def\lV{\left\Vert}  \def\rV{\right\Vert}  \def\V{\Vert}
\def\lb{\left\lbrace}  \def\rb{\right\rbrace}
\def\k{\mathcal{K}}

\title[\bf  The limiting weak type behaviors and The LOWER BOUND  ...]{\bf The limiting weak type behaviors and The LOWER BOUND FOR a new WEAK  $L\log L$ TYPE NORM OF STRONG MAXIMAL OPERATORS}
\thanks{{\it Key words and phrases}. Lower bound, best constant, limiting weak type behavior, strong maximal operator, multilinear strong maximal operator.
	\newline\indent\hspace{1mm} {\it 2010 Mathematics Subject Classification}. Primary 42B20; Secondary 42B25.
	\newline\indent\hspace{1mm} The authors were supported partly by NSFC (Nos. 11671039, 11771358, 11871101) and NSFC-DFG (No. 11761131002).}

\date{}
\author[M. Qin]{Moyan Qin}
\address{Moyan Qin:
	School of Mathematical Sciences \\
	Beijing Normal University \\
	Laboratory of Mathematics and Complex Systems \\
	Ministry of Education \\
	Beijing 100875 \\
	People's Republic of China}
\email{myqin@mail.bnu.edu.cn}

\author[ H. Wu]{ Huoxiong Wu}
\address{ Huoxiong Wu:
	 School of Mathematical Sciences\\ Xiamen University\\ Xiamen 361005\\
	People's Republic of China}
\email{huoxwu@xmu.edu.cn}

\author[Q. Xue]{Qingying Xue$^\ast$}
\address{Qingying Xue:
		School of Mathematical Sciences \\
	Beijing Normal University \\
	Laboratory of Mathematics and Complex Systems \\
	Ministry of Education \\
	Beijing 100875 \\
	People's Republic of China}
\email{qyxue@bnu.edu.cn}
	
	\thanks {*Corresponding
author, E-mail: \texttt{qyxue@bnu.edu.cn}}

\maketitle

\begin{center}
	\begin{minipage}{13cm}
	{\small {\bf Abstract}\quad
It is well known that the weak ($1,1$) bounds doesn't hold for the strong maximal operators, but it still enjoys certain weak $L\log L$ type norm inequality. Let $\Phi_n(t)=t(1+(\log^+t)^{n-1})$ and the space $L_{\Phi_n}(\rn)$ be the set of all measurable functions on $\rn$ such that
$\|f\|_{L_{\Phi_n}(\rn)} :=\|\Phi_n(|f|)\|_{L^1(\rn)}<\infty$. In this paper, we introduce a new weak norm space $L_{\Phi_n}^{1,\infty}(\rn)$, which is more larger than $L^{1,\infty}(\rn)$ space, and establish the correspondng limiting weak type behaviors of the strong maximal operators. As a corollary, we show that $ \max\lb{2^n}{((n-1)!)^{-1}},1\rb$ is a lower bound for the best constant of the $L_{\Phi_n}\to L_{\Phi_n}^{1,\infty}$ norm of the strong maximal operators. Similar results have been extended to
 the multilinear strong maximal operators.}
\end{minipage}
\end{center}

\vspace{0.2cm}


\section{Introduction}\label{sec1}

As one of the two fundamental operators in Harmonic analysis, the Hardy-Littlewood maximal operator has played very important roles in Harmonic analysis, ergodic theory and index theory. By Lebesgue differentiation theorem, it was known that the almost everywhere convergence property of some operators is closely related to whether their associated maximal operators enjoy certain weak type inequalities. Let $B(x,r)$ be a ball in $\rn$, centered at $x$ with radius $r$. Recall that the Hardy-Littlewood maximal function
\begin{equation}\label{eq11}
	M(f)(x) = \sup_{r>0}\frac{1}{|B(x,r)|}\int_{B(x,r)}|f(y)|dy
\end{equation}
and their purpose in differentiation on $\mathbb R$ were introduced by Hardy and Littlewood  \cite{HL1930}, and later extended and developed by  Wiener \cite{W1939} on $\rn$. The famous  Hardy-Littlewood-Wierer Theorem states that  $M$ is of weak $(1,1)$  type and $L^p$ bounded for $p>1$. Similar results also hold for uncentered Hardy-Littlewood maximal operator. In particular, Grafakos and Kinnunen \cite{GK1998} investigated the weak type estimates for uncentered Hardy-Littlewood maximal operator in general measure space of dimension one.

Now, we focus our concern on the the best constants problem of Hardy-Littlewood maximal operator. The best constants problem of weak endpoints estimates for Calder\'{o}n-Zygmund type operators has always attracted lots of attentions. For example, for $n=1$, Davis \cite{D1974} obtained the best constant of weak-type $(1,1)$ norm for Hilbert transform, and Melas \cite{M2003} proved that $\V M\V_{L^1\to L^{1,\infty}}=\frac{11+\sqrt{61}}{11}$. However, for $n\ge2$, things become more subtle. The upper bound of $\|M\|_{L^1\to L^{1,\infty}}$ is determined by Stein and Stromberg \cite{SS1983}. It was shown that it is less than a constant multiply $n\log n$. Since then, only tardy progress has been made. For the lower bound, it is easy to check that
$$\lim_{\lambda\to1^-}\lambda|\lb x\in\rn:M(\chi_{B(0,1)})(x)>\lambda\rb| = \|\chi_{B(0,1)}\|_{L^1(\rn)}$$
which implies that $\|M\|_{L^1\to L^{1,\infty}}\ge1$.

In 2006, Janakiraman \cite{J2006} investigated the limiting weak type behavior of $M$. He proved that
$$\lim_{\lambda\to0^+}\lambda|\lb x\in\rn:M(f)(x)>\lambda\rb|=\| f\|_{L^1(\rn)},$$ 
which again indicates that $\|M\|_{L^1\to L^{1,\infty}}\ge1$. Therefore, this gives a new way to find the lower bound of the best constant of the maximal operator, as well as some other operators, such as singular integrals, fractional integral operators, etc. See \cite{DL2017,DL2017.,GHW,HGW2019,HW2019,HW2019.,HH2008,J2004} and the references therein.

If the supremum in (\ref{eq11}) is taken over some other kinds of non-trivial bases, such as, translation in-variant basis of rectangles in the work of C\'{o}rdoba, Fefferman \cite{CF1975}, basis formed by convex bodies in \cite{Bo}, using rectangles with a side parallel to some direction (lacunary parabolic set of directions in \cite{NSW}, Cantor set of directions in \cite{KA1}, arbitrary set of directions in \cite{AS,KA2}). The strong boundedness or the weak type estimate for these new maximal operators may fail to hold in these cases. 
	
In this paper, the object of our investigation is the maximal operator associated with the translation in-variant basis of rectangles. In \rm{1935}, Jessen, Marcinkiewicz and Zygmund \cite{JMZ1935} pointed out that the following strong maximal function is not of weak type $(1,1)$, which is quite different from the classical Hardy-Littlewood maximal operator.
$$\m_n({f)}(x) = \sup_{\substack{R \ni x \\ R\in\mathcal{R}}}\frac{1}{|R|} \int_R |f(y)|dy,$$
where $\mathcal{R}$ denotes the family of all rectangles in $\rn$ with sides parallel to the axes. One may further ask why there is such a big difference between these two operators. This is mainly because the volume of a ball only depends on its one-dimensional radius, while the volume of a rectangle is related to the lengths of $n$ sides. Therefore, $M$ is essentially an operator of one parameter and $\m_n$ is an operator of $n$ parameter. 

As a replacement of weak ($1,1$) estimate, it was shown in \cite{JMZ1935} that the strong maximal operator enjoys the $L\log L$ weak type estimate as follows:
\begin{equation}\label{eq12}
	|\lb x\in\rn:\m_n(f)(x)>\lambda\rb|\lesssim\int_\rn\Phi_n\left(\frac{|f(x)|}\lambda\right)dx,
\end{equation}
where $\Phi_n(t)=t(1+(\log^+t)^{n-1})$ and $\log^+t=\max\lb\log t,0\rb$. A geometric proof of inequality (\ref{eq12}) was given by C\'ordoba and Fefferman \cite{CF1975}. It is worth pointing out that their elegant proof relies heavily on a covering lemma they established therein. This covering lemma is very important and has been widely used in many subsequent works. We refer the readers to references \cite{B1983,BK1984/85,Ch,LP2014,M2006}.

\vspace{0.2cm}

This paper is devoted to find the lower bounds for the best constant of the weak $L\log L$ type norm of the strong maximal operators. This will be done by establishing the limiting weak type behavior of $\m_n$. Since $\m_n$ is not of weak type $(1,1)$, the space $L^{1,\infty}$ and the limit of $\lambda|\lb x\in\rn:\m_n(f)(x)>\lambda\rb|$ are not suitable for our purpose. Therefore, we need to introduce the weak norm spaces $L_{\Phi_n}^{1,\infty}(\rn)$.

\begin{definition}[\bf {New weak norm spaces $L_{\Phi_n}^{1,\infty}(\rn)$}]Let $\Phi_n(t)=t(1+(\log^+t)^{n-1})$ and the space $L_{\Phi_n}(\rn)$ be the set of all measurable functions on $\rn$ such that
	$\|f\|_{L_{\Phi_n}(\rn)} :=\|\Phi_n(|f|)\|_{L^1(\rn)}<\infty$. The new weak norm space $L_{\Phi_n}^{1,\infty}(\rn)$, which is more larger than $L^{1,\infty}(\rn)$, is defined to  be the set of all measurable functions on $\rn$ such that
	\begin{equation*}\|f\|_{L_{\Phi_n}^{1,\infty}(\rn)} := \sup_{\lambda>0}\frac\lambda{1+(\log^+\frac1\lambda)^{n-1}}|\lb x\in\rn:|f(x)|>\lambda\rb| < \infty.\end{equation*}
\end{definition}

\vspace{0.2cm}

Our main results are as follows:
\begin{thm}\label{thm1}
	If $f\in L_{\Phi_n}(\rn)$, then $\m_n(f)\in L_{\Phi_n}^{1,\infty}(\rn)$ and enjoys the limiting weak type behaviors as follows:
	\begin{enumerate}
		\item[\rm{(i)}] $\displaystyle\lim_{\lambda\to0^+}\frac\lambda{1+(\log^+\frac1\lambda)^{n-1}}|\lb x\in\rn:\m_n(f)(x)>\lambda\rb|=\frac{2^n}{(n-1)!}\|f\|_{L^1(\rn)};$
		\item[\rm{(ii)}] $\displaystyle\lim_{\lambda\to\infty}\frac\lambda{1+(\log^+\frac1\lambda)^{n-1}}|\lb x\in\rn:\m_n(f)(x)>\lambda\rb|=0$.
	\end{enumerate}
\end{thm}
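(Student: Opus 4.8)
The plan is to treat the two statements separately: the membership $\m_n(f)\in L_{\Phi_n}^{1,\infty}(\rn)$ and part (ii) follow quickly from the weak $L\log L$ bound (\ref{eq12}), while (i) is proved first for nonnegative bounded compactly supported functions and then extended to all $f\in L_{\Phi_n}(\rn)$ by an approximation argument. Throughout, put $L=\log\frac1\lambda$ and note that $\frac{\lambda}{1+(\log^+\frac1\lambda)^{n-1}}\sim\frac{\lambda}{L^{n-1}}$ as $\lambda\to0^+$, while for $\lambda\ge1$ this factor is at most $\lambda$. For (ii) it therefore suffices to show $\lim_{\lambda\to\infty}\lambda\v\lb\m_n(f)>\lambda\rb\v=0$: splitting $f=f\chi_{\lb\v f\v>N\rb}+f\chi_{\lb\v f\v\le N\rb}=:f_1+f_2$ and using $\m_n(f_2)\le N$, for $\lambda>2N$ we have $\lb\m_n(f)>\lambda\rb\subseteq\lb\m_n(f_1)>\lambda/2\rb$, so (\ref{eq12}) gives $\lambda\v\lb\m_n(f)>\lambda\rb\v\lesssim\lambda\int_{\rn}\Phi_n\big(\tfrac{2\v f_1\v}{\lambda}\big)=\int_{\rn}2\v f_1\v\big(1+(\log^+\tfrac{2\v f_1\v}{\lambda})^{n-1}\big)$; letting $\lambda\to\infty$ (dominated convergence, valid since $f_1\in L_{\Phi_n}$) and then $N\to\infty$ yields (ii). The membership claim follows from (\ref{eq12}) in the same spirit, using $\log^+\frac{\v f\v}{\lambda}\le\log^+\v f\v+L$ for $\lambda<1$ to bound $\frac{\lambda}{1+(\log^+\frac1\lambda)^{n-1}}\v\lb\m_n(f)>\lambda\rb\v$ by a fixed multiple of $\V f\V_{L_{\Phi_n}}$ uniformly in $\lambda$.

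For the lower bound in (i), fix $\varepsilon>0$ and a large axis-parallel cube $Q_0=\prod_{j=1}^n[\alpha_j,\beta_j]$ with side lengths $\ell_j$ such that $\int_{Q_0^c}\v f\v<\varepsilon$. For any $x$, taking $R$ to be the bounding box of $Q_0\cup\lb x\rb$ — whose $j$-th side has length $\ell_j+d_j(x)$ with $d_j(x):=\mathrm{dist}(x_j,[\alpha_j,\beta_j])$ — gives $\m_n(f)(x)\ge\frac1{\v R\v}\int_{Q_0}\v f\v\ge(\V f\V_{L^1}-\varepsilon)/\prod_j(\ell_j+d_j(x))$, hence $\v\lb\m_n(f)>\lambda\rb\v\ge\v\lb x:\prod_j(\ell_j+d_j(x))<V\rb\v$ with $V:=(\V f\V_{L^1}-\varepsilon)/\lambda$. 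Since the image of Lebesgue measure under $x_j\mapsto\ell_j+d_j(x_j)$ is $\ell_j\delta_{\ell_j}+2\mathbf{1}_{(\ell_j,\infty)}\,ds$, the last quantity is a finite sum, over $S\subseteq\lb1,\dots,n\rb$, of terms equal (after the substitution $t_j=\ell_j e^{v_j}$) to $2^{\v S\v}\v Q_0\v\int_{\lb v_j>0,\ \sum_{j\in S}v_j<\log(V/\v Q_0\v)\rb}e^{\sum v_j}\,dv$, and the elementary identity $\int_{\lb v_j>0,\ \sum v_j<\Lambda\rb}e^{v_1+\dots+v_k}\,dv=\int_0^\Lambda e^t\tfrac{t^{k-1}}{(k-1)!}\,dt\sim\tfrac{\Lambda^{k-1}}{(k-1)!}e^{\Lambda}$ shows the $S=\lb1,\dots,n\rb$ term dominates. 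Thus $\v\lb x:\prod_j(\ell_j+d_j(x))<V\rb\v\sim\tfrac{2^n}{(n-1)!}V(\log V)^{n-1}$; multiplying by $\tfrac{\lambda}{L^{n-1}}$, using $\log V\sim L$, and letting $\lambda\to0^+$ and then $\varepsilon\to0$ yields $\liminf_{\lambda\to0^+}\tfrac{\lambda}{1+(\log^+\frac1\lambda)^{n-1}}\v\lb\m_n(f)>\lambda\rb\v\ge\tfrac{2^n}{(n-1)!}\V f\V_{L^1}$ for \emph{every} $f\in L^1(\rn)$.

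For the matching upper bound, first let $g\ge0$ with $g\le M$ and $\mathrm{supp}\,g\subseteq Q_0$. For $R=\prod_j[a_j,b_j]\ni x$ put $J_j=[a_j,b_j]\cap[\alpha_j,\beta_j]$ and $s_j=\v J_j\v\in[0,\ell_j]$; then $\int_Rg=\int_{\prod_j J_j}g\le\min\lb\V g\V_{L^1},M\prod_j s_j\rb$, while $\v R\v\ge\prod_j(d_j(x)+s_j)$ because $[a_j,b_j]\supseteq J_j\cup\lb x_j\rb$. Hence $\m_n(g)(x)\le\sup_{0\le s_j\le\ell_j}\dfrac{\min\lb\V g\V_{L^1},M\prod_j s_j\rb}{\prod_j(d_j(x)+s_j)}$. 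On the set where $d_j(x)>T$ for all $j$ ($T$ a large fixed constant), $\m_n(g)(x)>\lambda$ forces $\prod_j d_j(x)<\V g\V_{L^1}/\lambda=:W$; by the computation above with the atoms removed (substitute $d_j=e^{v_j}$, $v_j>\log T$) this set has measure $\sim\tfrac{2^n}{(n-1)!}W(\log W)^{n-1}\sim\tfrac{2^n}{(n-1)!}\tfrac{\V g\V_{L^1}}{\lambda}L^{n-1}$. On the complement, some $d_i(x)\le T$, so $x_i$ lies in an interval of bounded length and (when $d_j(x)>0$ for the remaining $j$) $\m_n(g)(x)\le M\prod_{j\ne i}\tfrac{\ell_j}{d_j(x)}$; subdividing further according to which of the remaining $d_j$ are $\le T$ produces only lower-dimensional contributions, each of order at most $O(\lambda^{-1}L^{n-2})=o(\lambda^{-1}L^{n-1})$. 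Combined with the lower bound from the previous paragraph, this shows (i) holds with equality for such $g$.

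Finally, given $f\in L_{\Phi_n}(\rn)$ write $f=g+h$ with $g$ bounded of compact support and $\int_{\rn}\Phi_n(\v h\v)$ (hence $\V h\V_{L^1}$) arbitrarily small — possible by dominated convergence, since $\Phi_n(\v f-g\v)\le\Phi_n(\v f\v)\in L^1$ when $g=f\chi_{\lb\v f\v\le N\rb}\chi_{B(0,N)}$. For $0<\varepsilon<1$, sublinearity gives $\lb\m_n(f)>\lambda\rb\subseteq\lb\m_n(g)>(1-\varepsilon)\lambda\rb\cup\lb\m_n(h)>\varepsilon\lambda\rb$; the first term is controlled by the case just settled (the factor $(1-\varepsilon)^{-1}$ and the ratio of logarithms tend to $1$), while (\ref{eq12}) together with $\log^+\frac{\v h\v}{\varepsilon\lambda}\le\log^+\frac{\v h\v}{\varepsilon}+L$ bounds $\tfrac{\lambda}{L^{n-1}}\v\lb\m_n(h)>\varepsilon\lambda\rb\v$ by a quantity tending to $\tfrac{C2^{n-2}}{\varepsilon}\V h\V_{L^1}$ as $\lambda\to0^+$. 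Hence $\limsup_{\lambda\to0^+}\tfrac{\lambda}{1+(\log^+\frac1\lambda)^{n-1}}\v\lb\m_n(f)>\lambda\rb\v\le\tfrac1{1-\varepsilon}\tfrac{2^n}{(n-1)!}\V f\V_{L^1}+\tfrac{C2^{n-2}}{\varepsilon}\V h\V_{L^1}$; taking $\V h\V_{L^1}<\varepsilon^2$ and letting $\varepsilon\to0$ finishes the upper bound, which with the general lower bound completes the proof of (i). The principal difficulty is pinning down the \emph{sharp} constant $\tfrac{2^n}{(n-1)!}$ in the upper bound for nice $g$: a direct appeal to the C\'ordoba–Fefferman covering lemma (which underlies (\ref{eq12})) only gives the correct order $\lambda^{-1}(\log\frac1\lambda)^{n-1}$, so one must instead use the explicit pointwise bound above and identify precisely the far region $\lb\prod_j d_j(x)\lesssim\V g\V_{L^1}/\lambda\rb$ that carries the full asymptotic mass.
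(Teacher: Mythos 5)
Your proposal is correct, and while the overall architecture matches the paper's (the level set of $\m_n$ carries its asymptotic mass $\tfrac{2^n}{(n-1)!}\lambda^{-1}\|f\|_{L^1}(\log\tfrac1\lambda)^{n-1}$ on the region far from the support, the near region is lower order, and a decomposition of $f$ handles the general case), several of your ingredients are genuinely different from, and somewhat leaner than, the paper's. The paper replaces $f$ by a continuous, strictly positive approximant $f_1$ on a cube and proves (Claims 1 and 2) that the supremum defining $\m_n(f_1)(x)$ is \emph{attained} at an explicit rectangle on the far region, so that both the lower and upper estimates there come from one exact formula; you instead use one-sided bounds only: for the lower bound a single competitor rectangle (the bounding box of $Q_0\cup\{x\}$), which works for \emph{every} $f\in L^1$ with no approximation and no positivity perturbation, and for the upper bound the pointwise majorization $\m_n(g)(x)\le\sup_{s}\min\{\|g\|_{L^1},M\prod_j s_j\}/\prod_j(d_j(x)+s_j)$ for bounded compactly supported $g$, which plays the role of the paper's auxiliary function $h$ and Claim 2. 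Your pushforward-plus-exponential-substitution computation, combined with the simplex identity, is essentially the content of the paper's inductive Lemma 2.1, but organized so that the factors $2^n$ and $1/(n-1)!$ and the lower-order terms (subsets $S\subsetneq\{1,\dots,n\}$) appear transparently. Your proof of (ii) by truncating $f$ at height $N$ and using dominated convergence is also cleaner and self-contained (the paper's proof of (ii) leans on its decomposition and, as written, cites equations from Section 3). The one place where you are sketchiest is exactly where the paper spends the most effort: the recursive treatment of the near region (``some $d_i\le T$''), which in the paper is the decomposition into the slabs $E''_{i,\tilde x}$ together with Claim 2; your one-line ``subdividing further produces only lower-dimensional contributions'' is correct but should be expanded to the full induction over the set $S$ of coordinates with $d_j\le T$, with the bound $\m_n(g)\le M\prod_{j\notin S}\ell_j/d_j$ and the resulting $O(\lambda^{-1}(\log\tfrac1\lambda)^{n-|S|-1})$ estimate made explicit.
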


\vspace{0.2cm}

Denote the centered strong maximal operator by $\m_n^c$, then we have
\begin{thm}\label{thm2}
If $f\in L_{\Phi_n}(\rn)$, then $\m_n^c(f)\in L_{\Phi_n}^{1,\infty}(\rn)$ and enjoys the limiting weak type behaviors as follows:
\begin{enumerate}
\item[\rm{(i)}] $\displaystyle\lim_{\lambda\to0^+}\frac\lambda{1+(\log^+\frac1\lambda)^{n-1}}|\lb x\in\rn:\m_n^c(f)(x)>\lambda\rb|=\frac{1}{(n-1)!}\|f\|_{L^1(\rn)};$
\item[\rm{(ii)}] $\displaystyle\lim_{\lambda\to\infty}\frac\lambda{1+(\log^+\frac1\lambda)^{n-1}}|\lb x\in\rn:\m_n^c(f)(x)>\lambda\rb|=0$.
\end{enumerate}
\end{thm}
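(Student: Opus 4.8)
The plan is to establish the two limits separately, assuming throughout that $f\ge0$ (legitimate since $\m_n^c(f)=\m_n^c(|f|)$ and $\|f\|_{L^1}=\||f|\|_{L^1}$) and, in (i), using $1+(\log^+\tfrac1\lambda)^{n-1}\sim(\log\tfrac1\lambda)^{n-1}$ as $\lambda\to0^+$ to reduce to evaluating $\lim_{\lambda\to0^+}\frac{\lambda}{(\log\frac1\lambda)^{n-1}}|\lb x\in\rn:\m_n^c(f)(x)>\lambda\rb|$. The argument runs in parallel with the proof of Theorem~\ref{thm1} and is best presented alongside it: the only structural change is that the smallest axis-parallel rectangle \emph{centered at} $x$ containing a bounded set $E$ has side lengths exactly twice --- hence volume $2^{n}$ times --- those of the smallest axis-parallel rectangle merely \emph{containing} $x$ and $E$, and this is precisely what turns the $2^{n}/(n-1)!$ of Theorem~\ref{thm1} into $1/(n-1)!$. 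Two facts drive everything: (a) the far-field asymptotic $\m_n^c(g)(x)=\frac{\|g\|_{L^1(\rn)}}{2^{n}\prod_{j=1}^{n}|x_j|}(1+o(1))$ as $\min_{1\le j\le n}|x_j|\to\infty$, valid for $g$ with compact support; and (b) the volume asymptotic $|\lb x\in\rn:|x_j|>a\ (\forall j),\ \prod_j|x_j|<B\rb|=\frac{2^{n}}{(n-1)!}B(\log B)^{n-1}(1+o(1))$ as $B\to\infty$, which follows from the substitution $x_j=a\e^{t_j}$ and $\int_0^{L}\e^{s}s^{n-1}\,ds=(n-1)!\,\e^{L}L^{n-1}(1+o(1))$. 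Assertion (ii) needs neither: for $\lambda$ large the weight is $1$, and splitting $f=f\chi_{\{f\le N\}}+f\chi_{\{f>N\}}$, using $\m_n^c(f\chi_{\{f\le N\}})\le N$ and \eqref{eq12}, one obtains $\lambda|\lb\m_n^c(f)>\lambda\rb|\lesssim\int_{\{f>N\}}(f+\Phi_n(f))$ for $\lambda>2N$, which tends to $0$ as $N\to\infty$ since $f,\Phi_n(f)\in L^1$.

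For the lower bound in (i) I would fix $\varepsilon\in(0,1)$, choose $\rho$ with $\int_{[-\rho,\rho]^{n}}f>(1-\varepsilon)\|f\|_{L^1}$, and note that whenever $|x_j|>\rho/\varepsilon$ for all $j$ the rectangle centered at $x$ with half-side-lengths $|x_j|+\rho$ contains $[-\rho,\rho]^{n}$, so $\m_n^c(f)(x)>\frac{(1-\varepsilon)\|f\|_{L^1}}{(1+\varepsilon)^{n}2^{n}\prod_j|x_j|}$. Hence $\lb\m_n^c(f)>\lambda\rb$ contains a set to which (b) applies, giving $\liminf_{\lambda\to0^+}\frac{\lambda}{(\log\frac1\lambda)^{n-1}}|\lb\m_n^c(f)>\lambda\rb|\ge\frac{(1-\varepsilon)\|f\|_{L^1}}{(1+\varepsilon)^{n}(n-1)!}$; letting $\varepsilon\to0$ completes this half.

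The upper bound in (i) is where the real work lies. Fix $\varepsilon,\delta\in(0,1)$ and choose $N,\rho$ so large that $g:=\min(f,N)\chi_{[-\rho,\rho]^{n}}$ and $h:=f-g\ge0$ satisfy $\int_\rn\Phi_n(h)<\varepsilon$ (possible since $\Phi_n$ is increasing, $\Phi_n(h)\le\Phi_n(f)\in L^1$ and $h\to0$ a.e.), so in particular $\|h\|_{L^1}<\varepsilon$. By sublinearity $|\lb\m_n^c(f)>\lambda\rb|\le|\lb\m_n^c(g)>(1-\delta)\lambda\rb|+|\lb\m_n^c(h)>\delta\lambda\rb|$; for the tail, \eqref{eq12} together with the elementary inequalities $\log^+(st)\le\log^+s+\log^+t$ and $(s+t)^{n-1}\le2^{n-1}(s^{n-1}+t^{n-1})$ gives $\int\Phi_n(h/(\delta\lambda))\lesssim\frac1{\delta\lambda}\big[(\log\tfrac1{\delta\lambda})^{n-1}\|h\|_{L^1}+\int\Phi_n(h)\big]$, hence $\limsup_{\lambda\to0^+}\frac{\lambda}{(\log\frac1\lambda)^{n-1}}|\lb\m_n^c(h)>\delta\lambda\rb|\lesssim\varepsilon/\delta$. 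For the main piece, fix $R>\rho$ and split $\rn$ into the far region $\Omega_R=\lb x:|x_j|>R\ \forall j\rb$ and the slabs $V_S=\lb x:|x_j|\le R\ (j\in S),\ |x_j|>R\ (j\notin S)\rb$ over nonempty $S\subseteq\lb1,\dots,n\rb$. On $\Omega_R$, since $\operatorname{supp}g\subseteq[-\rho,\rho]^{n}$ any centered rectangle at $x$ meeting $[-\rho,\rho]^{n}$ has $j$-th half-side $\ge|x_j|-\rho$, so $\m_n^c(g)(x)\le\|g\|_{L^1}\big/\big(\prod_j2(|x_j|-\rho)\big)$, and, arguing as in the lower bound (with $|x_j|-\rho\ge(1-\rho/R)|x_j|$ on $\Omega_R$), $\limsup_{\lambda\to0^+}\frac{\lambda}{(\log\frac1\lambda)^{n-1}}|\Omega_R\cap\lb\m_n^c(g)>(1-\delta)\lambda\rb|\le\frac{\|g\|_{L^1}}{(1-\delta)(1-\rho/R)^{n}(n-1)!}$. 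On $V_S$ with $\varnothing\ne S$, using $g\le N\chi_{[-\rho,\rho]^{n}}$ and the fact that for $j\notin S$ a relevant rectangle has $j$-th half-side $\ge|x_j|-\rho$, one gets the \emph{partial} decay $\m_n^c(g)(x)\le N\prod_{j\notin S}\rho/(|x_j|-\rho)$; combined with $|x_j|\le R$ for $j\in S$, the level set $V_S\cap\lb\m_n^c(g)>(1-\delta)\lambda\rb$ has measure $\lesssim_{n,R,\rho,N,\delta}(\log\tfrac1\lambda)^{\,n-1-|S|}/\lambda$ for $|S|<n$ and $\lesssim R^{n}$ for $|S|=n$, and since $|S|\ge1$ this is $o\big((\log\tfrac1\lambda)^{n-1}/\lambda\big)$ in every case. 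Summing, then letting $R\to\infty$, then combining with the tail bound and sending $\varepsilon\to0$ and $\delta\to0$, yields $\limsup_{\lambda\to0^+}\frac{\lambda}{(\log\frac1\lambda)^{n-1}}|\lb\m_n^c(f)>\lambda\rb|\le\frac{\|f\|_{L^1}}{(n-1)!}$. With the lower bound this proves (i), and $\m_n^c(f)\in L_{\Phi_n}^{1,\infty}(\rn)$ follows from (i), (ii) and \eqref{eq12} (which bounds the weight functional on any bounded range of $\lambda$).

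The step I expect to be the main obstacle is the family of slab estimates on the $V_S$: one must prove rigorously that a single bounded coordinate costs a whole power of $\log\tfrac1\lambda$, i.e. that those parts of the level set have size $O\big((\log\tfrac1\lambda)^{n-2}/\lambda\big)$. This is precisely the point at which the problem becomes genuinely lower-dimensional --- morally, integrate out the bounded variables with the weak $(1,1)$ bound of the one-dimensional maximal operator and apply the $(n-1)$-dimensional instance of the statement --- and handling this uniformly over all $2^{n}$ subsets $S$ is the bulk of the technical effort; a convenient device is the exact formula $\m_n^c(\chi_Q)(x)=|Q|\big/\big(\prod_j2(|x_j-c_j|+\ell_j/2)\big)$ for $x$ outside a rectangle $Q=\prod_j[c_j-\ell_j/2,\,c_j+\ell_j/2]$, which makes all these estimates explicit whenever $g$ is dominated by a multiple of $\chi_Q$.
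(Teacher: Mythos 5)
Your proposal is correct and follows essentially the same route as the paper's proof of Theorem \ref{thm1}, which the authors explicitly leave to the reader to adapt to the centered case: approximate $f$ by a bounded, compactly supported function, identify the far-field decay $\m_n^c(g)(x)\sim\|g\|_{L^1}/\bigl(2^n\prod_j|x_j|\bigr)$, apply the logarithmic volume asymptotic (the paper's Lemma \ref{lem21} in asymptotic form), show the slab regions where some coordinates stay bounded contribute only $O\bigl(\lambda^{-1}(\log\frac1\lambda)^{n-2}\bigr)$, and control all error terms with the weak $L\log L$ inequality (\ref{eq12}). Your use of the truncation $\min(f,N)\chi_{[-\rho,\rho]^n}$ together with two-sided sandwich bounds, in place of the paper's continuous approximant and exact identification of the maximizing rectangle (Claims 1 and 2), is a mild streamlining rather than a different method, and you correctly locate the source of the change of constant from $2^n/(n-1)!$ to $1/(n-1)!$.
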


\vspace{0.2cm}
From Theorem \ref{thm1} (i), it is easy to deduce the following corollary:

\begin{cor}\label{cor1}
The best constant of $\m_n$ and $\m_n^c$ satisfies
$$\|\m_n\|_{L_{\Phi_n}(\rn)\to L_{\Phi_n}^{1,\infty}(\rn)} \ge \max\lb\frac{2^n}{(n-1)!},1\rb;\quad \|\m_n^c\|_{L_{\Phi_n}(\rn)\to L_{\Phi_n}^{1,\infty}(\rn)} \ge1.$$
\end{cor}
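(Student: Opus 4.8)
The plan is to test both operators against the most transparent possible function, the indicator of a fixed unit cube, and to read off two separate lower bounds for the operator norm --- one from the limiting behavior at $\lambda\to0^+$ furnished by Theorem \ref{thm1}(i), and one from the behavior at $\lambda\to1^-$. Put $Q=[0,1]^n$ and $f=\chi_Q$. Since $0\le f\le1$ we have $\log^+|f|\equiv 0$, so $\Phi_n(|f|)=|f|$ and hence
\[
\|f\|_{L_{\Phi_n}(\rn)}=\|f\|_{L^1(\rn)}=|Q|=1 .
\]
This is an \emph{exact} identity, not just a comparison, which is precisely why the cube is the right test function: recalling that $\|\m_n\|_{L_{\Phi_n}\to L_{\Phi_n}^{1,\infty}}=\sup_{g\ne 0}\|\m_n g\|_{L_{\Phi_n}^{1,\infty}(\rn)}/\|g\|_{L_{\Phi_n}(\rn)}$, every lower bound we prove for $\|\m_n f\|_{L_{\Phi_n}^{1,\infty}(\rn)}$ transfers verbatim to the operator norm, and likewise for $\m_n^c$.

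Now I would estimate $\|\m_n f\|_{L_{\Phi_n}^{1,\infty}(\rn)}=\sup_{\lambda>0}\frac{\lambda}{1+(\log^+\frac1\lambda)^{n-1}}\big|\{\,x\in\rn:\m_n f(x)>\lambda\,\}\big|$ from the two ends of the $\lambda$-range. At the small-$\lambda$ end, Theorem \ref{thm1}(i) applied to $f=\chi_Q\in L_{\Phi_n}(\rn)$ gives
\[
\|\m_n f\|_{L_{\Phi_n}^{1,\infty}(\rn)}\ \ge\ \lim_{\lambda\to0^+}\frac{\lambda}{1+(\log^+\frac1\lambda)^{n-1}}\big|\{\,x\in\rn:\m_n f(x)>\lambda\,\}\big|\ =\ \frac{2^n}{(n-1)!}\,\|f\|_{L^1(\rn)}\ =\ \frac{2^n}{(n-1)!}.
\]
At the large-$\lambda$ end, observe that for each $x$ in the interior of $Q$ one may choose a small rectangle $R\ni x$ with $R\subset Q$, so $\m_n f(x)=1$; consequently $\{\m_n f>\lambda\}\supseteq\operatorname{int}Q$ and $|\{\m_n f>\lambda\}|\ge1$ for every $\lambda\in(0,1)$. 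Combining this with the elementary one-variable fact $\sup_{0<\lambda<1}\frac{\lambda}{1+(\log\frac1\lambda)^{n-1}}=1$ (the quotient is $<1$ throughout $(0,1)$ and tends to $1$ as $\lambda\to1^-$) yields $\|\m_n f\|_{L_{\Phi_n}^{1,\infty}(\rn)}\ge1$. Taking the larger of the two bounds gives $\|\m_n\|_{L_{\Phi_n}\to L_{\Phi_n}^{1,\infty}}\ge\max\{2^n/(n-1)!,\,1\}$.

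For $\m_n^c$ the same test function $\chi_Q$ works without change: for $x\in\operatorname{int}Q$ one takes a small rectangle \emph{centered at $x$} and contained in $Q$, so again $\m_n^c f(x)=1$ on $\operatorname{int}Q$ and the large-$\lambda$ argument gives $\|\m_n^c f\|_{L_{\Phi_n}^{1,\infty}(\rn)}\ge1$, while Theorem \ref{thm2}(i) supplies the (here weaker, since $(n-1)!\ge1$) bound $\ge 1/(n-1)!$; hence $\|\m_n^c\|_{L_{\Phi_n}\to L_{\Phi_n}^{1,\infty}}\ge1$. I do not anticipate any genuine obstacle: all the substance is already contained in Theorems \ref{thm1} and \ref{thm2}, and the only points needing a moment's care are the exact value $\|\chi_Q\|_{L_{\Phi_n}}=|Q|$, the elementary supremum $\sup_{0<\lambda<1}\frac{\lambda}{1+(\log\frac1\lambda)^{n-1}}=1$, and the observation that the constant $1$ inside the maximum must be harvested from the regime $\lambda\to1^-$ rather than from the $\lambda\to0^+$ limiting behavior --- which only produces $2^n/(n-1)!$ and $1/(n-1)!$ respectively, the former dropping below $1$ once $n\ge6$.
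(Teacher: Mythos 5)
Your proof is correct and follows essentially the same route as the paper: the bound $2^n/(n-1)!$ is harvested from Theorem \ref{thm1}(i) applied to a function with $\|f\|_{L_{\Phi_n}}=\|f\|_{L^1}$, and the bound $1$ from the behavior of an indicator function at $\lambda\to1^-$ (the paper uses $\chi_{B(0,1)}$ where you use $\chi_{[0,1]^n}$, an immaterial difference). Your version is in fact slightly more explicit about the $\lambda\to1^-$ step, which the paper leaves as ``easy to verify.''
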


\vspace{0.2cm}

The organization of this paper is as follows. The proofs of Theorem \ref{thm1} and Corollary \ref{cor1} will be presented in Section \ref{sec2}. The method of the proof of Theorem \ref{thm1} also can be applied to prove Theorem \ref{thm2}, so we leave it to the readers. In Section \ref{sec3}, a discussion on multilinear strong maximal operators will be given.


\section{Proof of Theorem \ref{thm1} and Corollary \ref{cor1}}\label{sec2}

For readability, this section will be divided into four subsections. The proof of Theorem \ref{thm1} will be given in the first three subsections, and the proof of Corollary \ref{cor1} will be demonstrated in the last one.

We begin with the following lemma, which provides a foundation for our analysis.

\begin{lem}\label{lem21}
Suppose $x=(x_1,\cdots,x_n)\in\rn$, $R_\varepsilon,r_\varepsilon,c$ be three positive numbers satisfy $c>(R_\varepsilon+r_\varepsilon)^n$. Then
$$\left|\lb x:x_1,\cdots,x_n>R_\varepsilon,\prod_{k=1}^n(x_k+r_\varepsilon)<c\rb\right| = \sum_{k=1}^nB_{n,k}c(\log c)^{n-k}+(-1)^n(R_\varepsilon+r_\varepsilon)^n,$$
where $B_{n,1}=1/(n-1)!$ and $B_{n,k}$ are real finite numbers only related to $n,k$ and $R_\varepsilon+r_\varepsilon$ for $k>2$.
\end{lem}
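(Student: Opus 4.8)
The plan is to compute the volume by an iterated integration, peeling off one variable at a time, and to organize the resulting primitive as a polynomial in $\log c$ with coefficients depending only on $n$, $k$, and $\rho:=R_\varepsilon+r_\varepsilon$. First I would substitute $y_k=x_k+r_\varepsilon$ to rewrite the region as $\lb y: y_k>R_\varepsilon+r_\varepsilon=\rho \text{ for all }k,\ \prod_{k=1}^n y_k<c\rb$; note the hypothesis $c>\rho^n$ guarantees this set is nonempty and bounded (each $y_k<c/\rho^{n-1}$). Then I would integrate in the order $y_n,y_{n-1},\dots,y_1$: for fixed $y_1,\dots,y_{n-1}$ all exceeding $\rho$, the variable $y_n$ ranges over $(\rho,\,c/(y_1\cdots y_{n-1}))$, which is a genuine interval precisely when $y_1\cdots y_{n-1}<c/\rho$. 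So the volume equals
\[
\int\!\!\cdots\!\!\int_{\substack{y_1,\dots,y_{n-1}>\rho\\ y_1\cdots y_{n-1}<c/\rho}}\left(\frac{c}{y_1\cdots y_{n-1}}-\rho\right)dy_1\cdots dy_{n-1},
\]
and one continues inductively.

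The key step is therefore an induction on $n$. I would formulate the induction hypothesis as an explicit formula for
\[
V_m(a):=\left|\lb y\in\mathbb{R}^m: y_1,\dots,y_m>\rho,\ \prod_{k=1}^m y_k<a\rb\right|,\qquad a>\rho^m,
\]
claiming it has the shape $\sum_{k=1}^m B_{m,k}\,a(\log a)^{m-k}+(-1)^m\rho^m$ with $B_{m,1}=1/(m-1)!$. The base case $m=1$ is $V_1(a)=a-\rho$, matching with $B_{1,1}=1$. For the inductive step, $V_{m}(a)=\int_\rho^{a/\rho^{m-1}} V_{m-1}(a/t)\,dt$ (integrating out $y_m=t$, using that for $y_m=t\in(\rho,a/\rho^{m-1})$ the remaining constraint is $\prod_{k<m}y_k<a/t$ with $a/t>\rho^{m-1}$). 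Substituting the inductive formula for $V_{m-1}(a/t)$ and integrating term by term reduces everything to the elementary primitives $\int (\log(a/t))^{j}\,dt$ and $\int t^{-1}$-type expressions; since $\log(a/t)=\log a-\log t$, each such integral produces a polynomial in $\log a$ (from the endpoint $t=a/\rho^{m-1}$, where $\log(a/t)=(m-1)\log\rho$ is constant, contributing only lower-order-in-$\log a$ terms) plus a constant term, and the leading term $B_{m-1,1}\,(a/t)$ integrated over $dt$ gives $B_{m-1,1}\,a\log(a/t)\big|$ whose top piece is $\frac{1}{(m-2)!}a\log a\cdot$, wait—more carefully, $\int_\rho^{a/\rho^{m-1}} (a/t)\,dt = a\log(a/t)$ evaluated, $=a\log(a/\rho)-a\log\rho^{m-1}=a\log a - m a\log\rho$, so the coefficient of $a\log a$ is $B_{m-1,1}=1/(m-2)!$; but I want $1/(m-1)!$, so the leading contribution must instead come from the term $B_{m-1,2}a(\log(a/t))^{?}$—I will need to track which term feeds the new leading term, and the correct bookkeeping is that $\int_\rho^{a/\rho^{m-1}}\frac{a}{t}\,dt$ is not the leading source; rather one should integrate $y_m$ \emph{last}, i.e. use $V_m(a)=\int_\rho^{a/\rho^{m-1}}\!\big(\text{inner }(m-1)\text{-volume with bound }a/t\big)dt$ and observe the recursion $B_{m,1}=\frac{1}{m-1}B_{m-1,1}$ coming from $\int (\log t)^{m-2}/t\,dt$-type terms; either way the arithmetic is routine once set up.

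The main obstacle I anticipate is precisely this coefficient bookkeeping: verifying that the recursion among the $B_{m,k}$ is consistent, that $B_{m,1}=1/(m-1)!$ survives exactly, and that every lower-order coefficient is a \emph{finite} constant depending only on $m,k,\rho$ (in particular that no $\log a$ leaks into them and nothing diverges). The boundedness and nonemptiness issues are handled cleanly by the hypothesis $c>\rho^n$, and the $(-1)^n\rho^n$ constant term is tracked by noting $V_m(\rho^m)=0$ forces $\sum_k B_{m,k}\rho^m(m\log\rho)^{m-k}=-(-1)^m\rho^m$, a consistency check I would use to double-check the computation rather than as part of the proof. I would present the induction with the term-by-term integration displayed once for general $m$, then simply state that collecting powers of $\log a$ yields the claimed form with the asserted value of $B_{n,1}$.
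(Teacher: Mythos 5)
Your proposal follows essentially the same route as the paper: induction on the dimension, peeling off one variable, integrating the inductive formula term by term over $t\in(\rho,c/\rho^{n-1})$, and extracting the leading coefficient from $\int \frac{1}{t}\bigl(\log\frac{a}{t}\bigr)^{m-2}dt=-\frac{1}{m-1}\bigl(\log\frac{a}{t}\bigr)^{m-1}$, which gives exactly the recursion $B_{m,1}=\frac{1}{m-1}B_{m-1,1}=\frac{1}{(m-1)!}$ that the paper uses. Your mid-paragraph worry about getting $1/(m-2)!$ came only from momentarily dropping the factor $(\log(a/t))^{m-2}$ in the leading term of the inductive hypothesis; once that factor is restored, the bookkeeping you describe closes the induction correctly.
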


\begin{proof}
The proof will be done by reduction on $n$. Obviously, Lemma \ref{lem21} holds when $n=1$. Now assume that the result holds for $(n-1)$-dimensional case and we need to show that it holds for the $n$ dimensional case.  
By a fundamental calculation, we have
\begin{align*}
	& \left|\lb x:x_1,\cdots,x_n>R_\varepsilon,\prod_{k=1}^n(x_k+r_\varepsilon)<c\rb\right| \\
	&= \int_{R_\varepsilon}^{\frac c{(R_\varepsilon+r_\varepsilon)^{n-1}}-r_\varepsilon}\left|\lb(x_2,\cdots,x_n):x_2,\cdots,x_n>R_\varepsilon,\prod_{k=2}^n(x_k+r_\varepsilon)<\frac c{x_1+r_\varepsilon}\rb\right|dx_1 \\
	&= \frac1{(n-2)!}\int_{R_\varepsilon}^{\frac c{(R_\varepsilon+r_\varepsilon)^{n-1}}-r_\varepsilon}\frac c{(x_1+r_\varepsilon)}\left(\log\frac c{x_1+r_\varepsilon}\right)^{n-2}dx_1 \\
	&\quad +\sum_{k=3}^nB_{n-1,k}\int_{R_\varepsilon}^{\frac c{(R_\varepsilon+r_\varepsilon)^{n-1}}-r_\varepsilon}\frac c{(x_1+r_\varepsilon)}\left(\log\frac c{x_1+r_\varepsilon}\right)^{n-k}dx_1 \\
	&\quad +(-1)^{n-1}(R_\varepsilon+r_\varepsilon)^{n-1}\left(\frac c{(R_\varepsilon+r_\varepsilon)^{n-1}}-r_\varepsilon-R_\varepsilon\right) \\
	&= \frac{c(\log c)^{n-1}}{(n-1)!}+\sum_{k=2}^nB_{n,k}c(\log c)^{n-k}+(-1)^n(R_\varepsilon+r_\varepsilon)^n.
\end{align*}
Therefore the proof of Lemma \ref{lem21} is finished by reduction.
\end{proof}

\vspace{-0.2cm}
Now we are ready to prove Theorem \ref{thm1}. We divide it into three subsections.

\subsection{$\m_n$ is of type $(L_{\Phi_n},L_{\Phi_n}^{1,\infty})$.}
\ 
\newline
\indent Note that
\begin{equation*}
	\begin{aligned}
		\log^+\frac{|f(x)|}\lambda &\le \lb
		\begin{aligned}
			\log|f(x)|+\log\frac1\lambda, \qquad |f(x)|\ge\lambda \\
			0, \qquad\qquad\quad |f(x)|<\lambda
		\end{aligned}\right. \\
		&\le \log^+|f(x)| + \log^+\frac1\lambda,
	\end{aligned}
\end{equation*}
then we have
\begin{align*}
	\left(\log^+\frac{|f(x)|}\lambda\right)^{n-1} &\le \left(\log^+|f(x)| + \log^+\frac1\lambda\right)^{n-1} \\
	&\le 2^{n-1}\max\lb(\log^+|f(x)|)^{n-1},\left(\log^+\frac1\lambda\right)^{n-1}\rb \\
	&\le 2^{n-1}\left(1+(\log^+|f(x)|)^{n-1}\right)\left(1+\left(\log^+\frac1\lambda\right)^{n-1}\right).
\end{align*}

Therefore it follows from (\ref{eq12}) that
\begin{align*}
	& |\lb x\in\rn:\m_n(f)(x)>\lambda\rb| \\
	&\le C_n'\int_\rn\frac{|f(x)|}\lambda\left(1+2^{n-1}\left(1+(\log^+|f(x)|)^{n-1}\right)\left(1+\left(\log^+\frac1\lambda\right)^{n-1}\right)\right)dx \\
	&\le 2^nC_n'\frac{1+(\log^+\frac1\lambda)^{n-1}}\lambda\|f\|_{L_{\Phi_n}(\rn)},
\end{align*}
which implies that
\begin{equation}\label{eq21}
	\frac\lambda{1+(\log^+\frac1\lambda)^{n-1}}|\lb x\in\rn:\m_n(f)(x)>\lambda\rb| \le C_n\|f\|_{L_{\Phi_n}(\rn)}
\end{equation}
for all $\lambda>0$. This completes the proof that $\m_n(f)\in L_{\Phi_n}^{1,\infty}(\rn)$ if $f\in L_{\Phi_n}(\rn)$.

\subsection{Proof of Theorem \ref{thm1} (i)}
\ 
\newline
\indent We may assume $\|f\|_{L^1(\rn)}>0$, otherwise there is nothing need to be proved.

Note that for all $0<\varepsilon\ll\max\lb\|f\|_{L^1(\rn)},1\rb$, there exists a positive real number $r_\varepsilon>1$, such that
$$\|f\|_{L_{\Phi_n}(\rn\backslash[-r_\varepsilon,r_\varepsilon]^n)}<\varepsilon.$$

Since $C([-r_\varepsilon,r_\varepsilon]^n)$ is dense in $L_{\Phi_n}([-r_\varepsilon,r_\varepsilon]^n)$, then there exists a continuous function $\widetilde f_1$ defined on $[-r_\varepsilon,r_\varepsilon]^n$ satisfying
$$\|f-\widetilde f_1\|_{L_{\Phi_n}([-r_\varepsilon,r_\varepsilon]^n)}<\varepsilon.$$
Now we denote
\begin{align*}
	f_1 &= |\widetilde f_1|+\frac\varepsilon{(2r_\varepsilon)^n}\chi_{[-r_\varepsilon,r_\varepsilon]^n}; \\
	f_2 &= |f|\chi_{\rn\backslash[-r_\varepsilon,r_\varepsilon]^n}; \\
	f_3 &= |f\chi_{[-r_\varepsilon,r_\varepsilon]^n}-\widetilde f_1|; \\
	f_4 &= \frac\varepsilon{(2r_\varepsilon)^n}\chi_{[-r_\varepsilon,r_\varepsilon]^n}.
\end{align*}
Therefore
$$f_1-f_3-f_4 \le |f| \le f_1+f_2+f_3$$
and
$$\|f_i\|_{L^1(\rn)} \le \|f_i\|_{L_{\Phi_n}(\rn)} \le \varepsilon,\quad i=2,3,4.$$
These two facts immediately indicate that
\begin{equation}\label{eq221}
	\m_n(f_1)(x) - \sum_{i=3}^4\m_n(f_i)(x) \le \m_n(f)(x) \le \m_n(f_1)(x) + \sum_{i=2}^3\m_n(f_i)(x)
\end{equation}
and
\begin{align*}
	\|f_1\|_{L^1(\rn)} - 2\varepsilon \le \|f\|_{L^1(\rn)} \le \|f_1\|_{L^1(\rn)} + 2\varepsilon.
\end{align*}
To control the weak norm of $\m_n$, we need to introduce some notions. Let
\begin{align*}
	E_\lambda &= \lb x\in\rn:\m_n(f)(x)>\lambda\rb; \\
	E_\lambda^i &= \lb x\in\rn:\m_n(f_i)(x)>\lambda\rb,\qquad i=1,2,3,4. 
\end{align*}
Thus it follows from (\ref{eq221}) that
\begin{equation}\label{eq222}
	E_{(1+2\sqrt\varepsilon)\lambda}^1\backslash(E_{\sqrt\varepsilon\lambda}^3\cup E_{\sqrt\varepsilon\lambda}^4) \subset E_\lambda \subset E_{(1-2\sqrt\varepsilon)\lambda}^1\cup E_{\sqrt\varepsilon\lambda}^2\cup E_{\sqrt\varepsilon\lambda}^3.
\end{equation}

To prove Theorem \ref{thm1} (i), we need to consider the contribution of each term on both sides of (\ref{eq222}). Here is the main structure of this proof. The upper estimates for $E_{\sqrt\varepsilon\lambda}^2$, $E_{\sqrt\varepsilon\lambda}^3$ and $E_{\sqrt\varepsilon\lambda}^4$ will be given in Step 1. In Step 2, we are going to estabilish the lower estimate of $E_{(1+2\sqrt\varepsilon)\lambda}^1$. Combining with the upper estimates in Step 1, we may deduce the lower estimate of $E_\lambda$. In Step 3, an upper estimate for $E_{(1-2\sqrt\varepsilon)\lambda}^1$ will be given. Then the results in Step 1 and Step 3 yield an upper estimate for $E_\lambda$.

\vspace{0.3cm}
\noindent{\bf Step 1: Upper estimates for $E_{\sqrt\varepsilon\lambda}^2,E_{\sqrt\varepsilon\lambda}^3,E_{\sqrt\varepsilon\lambda}^4$.}

By the fact that $\|f_i\|_{L_{\Phi_n}(\rn)}\le\varepsilon$ for $i=2,3,4$, together with (\ref{eq21}), we obtain the upper estimates as follows:
\begin{equation}\label{eq22s1}
	|E_{\sqrt\varepsilon\lambda}^i| \le C_n\frac{1+(\log^+\frac1{\sqrt\varepsilon\lambda})^{n-1}}{\sqrt\varepsilon\lambda}\varepsilon = C_n\frac{1+(\log^+\frac1{\sqrt\varepsilon\lambda})^{n-1}}{\lambda}\sqrt\varepsilon,\qquad i=2,3,4.
\end{equation}

\vspace{0.3cm}
\noindent{\bf Step 2: Lower estimate for $E_{(1+2\sqrt\varepsilon)\lambda}^1$.}

Recalling that $f_1$ is a continuous function on $[-r_\varepsilon,r_\varepsilon]^n$, then for all $y\in[-r_\varepsilon,r_\varepsilon]^n$, we have
$$\frac\varepsilon{(2r_\varepsilon)^n} \le f_1(y) \le \max\limits_{y\in[-r_\varepsilon,r_\varepsilon]^n}f_1(y) =: A_\varepsilon <\infty.$$

Let $R_\varepsilon=(2r_\varepsilon)^{n+1}A_\varepsilon/\varepsilon+r_\varepsilon$ and define
$$E' = \lb(x_1,\cdots,x_n):|x_1|,\cdots,|x_n|>R_\varepsilon\rb.$$
From geometric view, $E'$ can be divided into $2^n$ intervals, so we denote
$$E_1' = \lb(x_1,\cdots,x_n):x_1,\cdots,x_n>R_\varepsilon\rb,$$
and the others by $E'_2,\cdots,E'_{2^n}$.

For all $x=(x_1,\cdots,x_n)\in E'_1$ and $\vec a=(a_1,\cdots,a_n),\vec b=(b_1,\cdots,b_n)$ satisfy
$$a_k\le x_k\le b_k\quad\text{and}\quad a_k<b_k,\qquad k=1,\cdots,n,$$
we define
$$F(\vec a,\vec b,x) = \frac1{\prod\limits_{k=1}^n(b_k-a_k)}\int_{a_1}^{b_1}\cdots\int_{a_n}^{b_n}f_1(y)dy.$$
Then we have the following claim.

\vspace{0.2cm}

\noindent{\bf Claim 1:} $F(\vec a,\vec b,x)$ obtains its maximum at $\vec a=(-r_\varepsilon,\cdots,-r_\varepsilon)$ and $\vec b=x$.

\vspace{0.2cm}

Note that $\text{supp}f_1 = [-r_\varepsilon,r_\varepsilon]^n$. Obviously if there exists an $a_j\ge r_\varepsilon$, then $\text{supp}f_1 \cap ([a_1,b_1]\times\cdots\times[a_n,b_n])$ is a set of measure $0$, which means $F(\vec a,\vec b,x)=0$. So we only have to discuss the case all $a_j<r_\varepsilon$. It's also easy to observe that $F(\vec a,\vec b,x)$ is a decreasing function of $b_j$. Since each $b_j\ge x_j>r_\varepsilon$, thus
$$F(\vec a,\vec b,x) \le F(\vec a,x,x) = \frac1{\prod\limits_{k=1}^n(x_k-a_k)}\int_{a_1}^{r_\varepsilon}\cdots\int_{a_n}^{r_\varepsilon}f_1(y)dy.$$

For $a_j<r_\varepsilon$, one may find
\begin{itemize}
	\item If $a_j<-r_\varepsilon$, then
		$$\frac{\partial F}{\partial a_j}(\vec a,x,x) = \frac1{x_j-a_j}F(\vec a,x,x)>0;$$
	\item If $-r_\varepsilon<a_j<r_\varepsilon$, then
		\begin{align*}
			& \frac{\partial F}{\partial a_j}(\vec a,x,x) \\
			&= \frac1{\prod\limits_{k=1}^n(b_k-a_k)}\Bigg(\frac1{b_j-a_j}\int_{a_1}^{r_\varepsilon}\cdots\int_{a_n}^{r_\varepsilon}f_1(y)dy \\
			&\quad -\int\limits_{a_1}^{r_\varepsilon}\cdots\int\limits_{a_{j-1}}^{r_\varepsilon}\int\limits_{a_{j+1}}^{r_\varepsilon}\cdots\int\limits_{a_n}^{r_\varepsilon}f_1(y_1,\cdots,y_{j-1},a_j,y_{j+1},\cdots,y_n)dy_n\cdots dy_{j+1}dy_{j-1}\cdots dy_1\Bigg) \\
			&\le \frac{\prod\limits_{k\neq j}^n(r_\varepsilon-a_k)}{\prod\limits_{k=1}^n(b_k-a_k)}\left(\frac{r_\varepsilon-a_j}{x_j-a_j}A_\varepsilon-\frac{\varepsilon}{(2r_\varepsilon)^n}\right) \le \frac{\prod\limits_{k\neq j}^n(r_\varepsilon-a_k)}{\prod\limits_{k=1}^n(b_k-a_k)}\left(\frac{2r_\varepsilon}{R_\varepsilon-r_\varepsilon}A_\varepsilon-\frac{\varepsilon}{(2r_\varepsilon)^n}\right) < 0.
		\end{align*}
\end{itemize}
These arguments deduce that $F(\vec a,x,x) \le F((-r_\varepsilon,\cdots,-r_\varepsilon),x,x)$. Therefore Claim 1 is proved. 

\vspace{0.2cm}

For $x\in E'_1$, it follows from Claim 1 that
$$\m_n(f_1)(x) = \sup_{\vec a,\vec b}F(\vec a,\vec b,x) = \frac1{\prod\limits_{k=1}^n(x_k+r_\varepsilon)}\int_{[-r_\varepsilon,r_\varepsilon]^n}f_1(y)dy.$$
For any $0 < \lambda < {\|f_1\|_{L^1(\rn)}}/{((1+2\sqrt\varepsilon)(R_\varepsilon+r_\varepsilon)^n)}$, Lemma \ref{lem21} yields that
\begin{align*}
	|E_{(1+2\sqrt\varepsilon)\lambda}^1 \cap E'_1|
	&= \left|\lb x:x_1,\cdots,x_n>R_\varepsilon,\prod_{k=1}^n(x_k+r_\varepsilon)<\frac{\|f_1\|_{L^1(\rn)}}{(1+2\sqrt\varepsilon)\lambda}\rb\right| \\
	&= \sum_{k=1}^nB_{n,k}\frac{\|f_1\|_{L^1(\rn)}}{(1+2\sqrt\varepsilon)\lambda}\left(\log\frac{\|f_1\|_{L^1(\rn)}}{(1+2\sqrt\varepsilon)\lambda}\right)^{n-k} + (-1)^n(R_\varepsilon+r_\varepsilon)^n.
\end{align*}

Repeated applications of the same technique to each $E'_i$ lead to the equation
$$|E_{(1+2\sqrt\varepsilon)\lambda}^1 \cap E'_i| = \sum_{k=1}^nB_{n,k}\frac{\|f_1\|_{L^1(\rn)}}{(1+2\sqrt\varepsilon)\lambda}\left(\log\frac{\|f_1\|_{L^1(\rn)}}{(1+2\sqrt\varepsilon)\lambda}\right)^{n-k} + (-1)^n(R_\varepsilon+r_\varepsilon)^n.$$
Combining with (\ref{eq222}) and (\ref{eq22s1}), we obtain that
\begin{align*}
	|E_\lambda|
	&\ge |E_{(1+2\sqrt\varepsilon)\lambda}^1| - |E_{\sqrt\varepsilon\lambda}^3| - |E_{\sqrt\varepsilon\lambda}^4|
	\ge \sum_{i=1}^{2^n}|E_{(1+2\sqrt\varepsilon)\lambda}^1 \cap E'_i| - |E_{\sqrt\varepsilon\lambda}^3| - |E_{\sqrt\varepsilon\lambda}^4| \\
	&\ge 2^n\sum_{k=1}^nB_{n,k}\frac{\|f_1\|_{L^1(\rn)}}{(1+2\sqrt\varepsilon)\lambda}\left(\log\frac{\|f_1\|_{L^1(\rn)}}{(1+2\sqrt\varepsilon)\lambda}\right)^{n-k} + (-2)^n(R_\varepsilon+r_\varepsilon)^n \\
	&\quad - 2C_n\frac{1+(\log^+\frac1{\sqrt\varepsilon\lambda})^{n-1}}{\lambda}\sqrt\varepsilon.
\end{align*}
Multipling $\lambda/(1+(\log^+\frac1\lambda)^{n-1})$ on both sides and let $\lambda\to0^+$, we conclude that
\begin{align*}
	\varliminf_{\lambda\to0^+}\frac\lambda{1+(\log^+\frac1\lambda)^{n-1}}|E_\lambda| &\ge \frac{2^nB_{n,1}}{1+2\sqrt\varepsilon}\|f_1\|_{L^1(\rn)} - 2C_n\sqrt\varepsilon \\
	&\ge \frac{2^n}{(n-1)!(1+2\sqrt\varepsilon)}\left(\|f\|_{L^1(\rn)}-2\varepsilon\right) - 2C_n\sqrt\varepsilon.
\end{align*}
By the arbitrariness of $\varepsilon$, we deduce that
\begin{equation}\label{eq22s2}
	\varliminf_{\lambda\to0^+}\frac\lambda{1+(\log^+\frac1\lambda)^{n-1}}|E_\lambda| \ge \frac{2^n}{(n-1)!}\|f\|_{L^1(\rn)}.
\end{equation}

\vspace{0.3cm}
\noindent{\bf Step 3: Upper estimate for $E_{(1-2\sqrt\varepsilon)\lambda}^1$.}

The argument used in Step 2 also works for $|E_{(1-2\sqrt\varepsilon)\lambda}^1 \cap E'|$, one may obtain
\begin{equation}\label{eq22s31}
	|E_{(1-2\sqrt\varepsilon)\lambda}^1 \cap E'| = 2^n\sum_{k=1}^nB_{n,k}\frac{\|f_1\|_{L^1(\rn)}}{(1-2\sqrt\varepsilon)\lambda}\left(\log\frac{\|f_1\|_{L^1(\rn)}}{(1-2\sqrt\varepsilon)\lambda}\right)^{n-k} + (-2)^n(R_\varepsilon+r_\varepsilon)^n.
\end{equation}
Now we only need to consider the contribution of $|E_{(1-2\sqrt\varepsilon)\lambda}^1 \cap (\rn\backslash E')|$.

Note that $\rn\backslash E'$ can be written as
\begin{equation}\label{eq22s32}
	\begin{aligned}
		\rn\backslash E' &= \bigcup_{i=1}^n\bigcup_{\tilde x\in\mathcal A}\lb x:|x_{l_1}|,\cdots,|x_{l_i}|\le R_\varepsilon,|x_{l_{i+1}}|,\cdots,|x_{l_n}|>R_\varepsilon\rb \\
		&=: \left(\bigcup_{i=1}^{n-1}\bigcup_{\tilde x\in\mathcal A}E''_{i,\tilde x}\right) \cup [-R_\varepsilon,R_\varepsilon]^n,
	\end{aligned}
\end{equation}
where $\tilde x=(x_{l_1},\cdots,x_{l_n})$, $\mathcal A$ is the family of all permutations of $(x_1,\cdots,x_n)$, and the cardinality of $\mathcal A$ is $n!$.

Similar as in Step 2, we may split $E''_{i,\tilde x}$ into $2^{n-i}$ sets and denote
$$E''_{i,\tilde x,1} = \lb x:|x_{l_1}|,\cdots,|x_{l_i}|\le R_\varepsilon,x_{l_{i+1}},\cdots,x_{l_n}>R_\varepsilon\rb,$$
and the others by $E''_{i,\tilde x,2},\cdots,E''_{i,\tilde x,2^{n-i}}$. See Figure \ref{figE} for $3$-dimensional case.

\begin{figure}[ht]
	\centering
	\includegraphics[width=15cm]{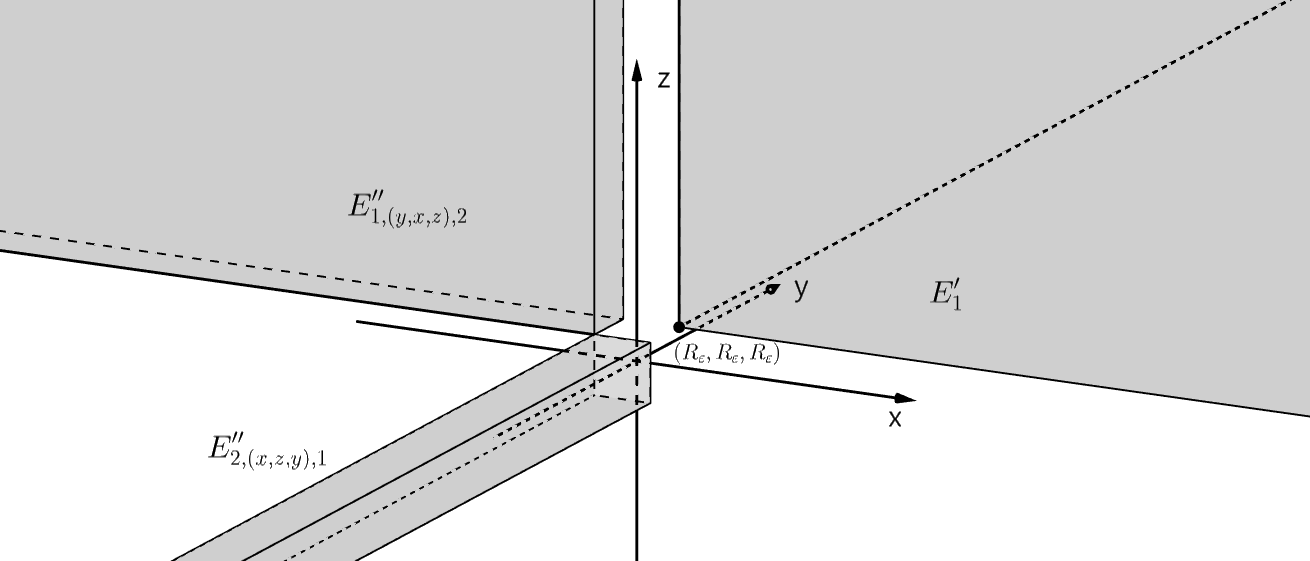}
	\caption{Part of $E'$,$E''_{1,\widetilde x}$ and $E''_{2,\widetilde x}$ in dimension 3.}
	\label{figE}
\end{figure}

Now for $x\in E''_{i,\tilde x,1}$, we define an auxiluary function $h$ which depends on $\varepsilon$ and $\widetilde x$ as
$$h(x) = A_\varepsilon\cdot\chi_{\lb x:|x_{l_1}|,\cdots,|x_{l_i}|\le R_\varepsilon,|x_{l_{i+1}}|,\cdots,|x_{l_n}|\le r_\varepsilon\rb}(x).$$
It is easy to see that $0<f_1\le h$. Then we denote
\begin{align*}
	& H(\vec a,\vec b,x) = \frac1{\prod\limits_{k=1}^n(b_{l_k}-a_{l_k})}\int_{a_{l_1}}^{b_{l_1}}\cdots\int_{a_{l_n}}^{b_{l_n}}h(y)dy \\
	&= A_\varepsilon\prod_{k=1}^i\frac{\min\lb R_\varepsilon,b_{l_k}\rb - \max\lb -R_\varepsilon,a_{l_k}\rb}{b_{l_k}-a_{l_k}}\cdot\prod_{k=i+1}^n\frac{\max\lb r_\varepsilon-a_{l_k},0\rb - \max\lb -r_\varepsilon-a_{l_k},0\rb}{b_{l_k}-a_{l_k}},
\end{align*}
and claim that:

\vspace{0.2cm}

\noindent{\bf Claim 2:} $H(\vec a,\vec b,x)$ obtains its maximum at $-R_\varepsilon\le a_{l_k}<b_{l_k}\le R_\varepsilon$ for $1\le k\le i$ and $a_{l_k}=-r_\varepsilon,b_{l_k}=x_{l_k}$ for $i+1\le k\le n$.

\vspace{0.2cm}

In fact, for $1\le k\le i$, then it is obvious that
$$\frac{\min\lb R_\varepsilon,b_{l_k}\rb - \max\lb -R_\varepsilon,a_{l_k}\rb}{b_{l_k}-a_{l_k}} \le 1,$$
and the equal sign works only if $-R_\varepsilon\le a_{l_k}<b_{l_k}\le R_\varepsilon$. On the other hand, for $i+1\le j\le n$, the following inequality holds:
$$\frac{\max\lb r_\varepsilon-a_{l_k},0\rb - \max\lb -r_\varepsilon-a_{l_k},0\rb}{b_{l_k}-a_{l_k}} \le \frac{2r_\varepsilon}{x_{l_k}+r_\varepsilon},$$
and the equality works only if $a_{l_k}=-r_\varepsilon$ and $b_{l_k}=x_{l_k}$. Then Claim 2 was proved.

\vspace{0.2cm}

By Claim 2, it follows that
$$\m_n(f_1)(x) \le \m_n(h)(x) = \sup_{\vec a,\vec b}H(\vec a,\vec b,x) = A_\varepsilon(2r_\varepsilon)^{n-i}\prod_{k=i+1}^n\frac1{x_{l_k}+r_\varepsilon}.$$
Therefore for $\lambda < A_\varepsilon(2r_\varepsilon)^{n-i}/((1-2\sqrt\varepsilon)(R_\varepsilon+r_\varepsilon)^{n-i})$, we have
\begin{align*}
	& |E_{(1-2\sqrt\varepsilon)\lambda}^1 \cap E''_{i,\tilde x,1}| \\
	&\le \left|\lb x:|x_{l_1}|,\cdots,|x_{l_i}|\le R_\varepsilon,x_{l_{i+1}},\cdots,x_{l_n}>R_\varepsilon,\prod_{k=i+1}^n{x_{l_k}+r_\varepsilon}<\frac{A_\varepsilon(2r_\varepsilon)^{n-i}}{(1-2\sqrt\varepsilon)\lambda}\rb\right| \\
	&= (2R_\varepsilon)^i\cdot\Bigg[\sum_{k=1}^{n-i}B_{n-i,k}\frac{A_\varepsilon(2r_\varepsilon)^{n-i}}{(1-2\sqrt\varepsilon)\lambda}\left(\log\frac{A_\varepsilon(2r_\varepsilon)^{n-i}}{(1-2\sqrt\varepsilon)\lambda}\right)^{n-i-k} + (-1)^{n-i}(R_\varepsilon+r_\varepsilon)^{n-i}\Bigg].
\end{align*}
Similarly, each of $|E_{(1-2\sqrt\varepsilon)\lambda}^1 \cap E''_{i,\tilde x,2}|,\cdots,|E_{(1-2\sqrt\varepsilon)\lambda}^1 \cap E''_{i,\tilde x,2^{n-i}}|$ enjoys the same bound. Therefore
\begin{align*}
	& |E_{(1-2\sqrt\varepsilon)\lambda}^1 \cap E''_{i,\tilde x}| \\
	&\le (2R_\varepsilon)^i\cdot\Bigg[\sum_{k=1}^{n-i}B_{n-i,k}\frac{A_\varepsilon(4r_\varepsilon)^{n-i}}{(1-2\sqrt\varepsilon)\lambda}\left(\log\frac{A_\varepsilon(2r_\varepsilon)^{n-i}}{(1-2\sqrt\varepsilon)\lambda}\right)^{n-i-k} + (-2)^{n-i}(R_\varepsilon+r_\varepsilon)^{n-i}\Bigg].
\end{align*}
Hence by (\ref{eq22s32}), we get
\begin{equation}\label{eq22s33}
	\begin{aligned}
		& |E_{(1-2\sqrt\varepsilon)\lambda}^1 \cap (\rn\backslash E')| \le \sum_{i=1}^{n-1}\sum_{\widetilde x\in\mathcal A}|E_{(1-2\sqrt\varepsilon)\lambda}^1 \cap E''_{i,\tilde x}| + |[-R_\varepsilon,R_\varepsilon]^n| \\
		&\le \sum_{i=1}^{n-1}n!(2R_\varepsilon)^i\cdot\Bigg[\sum_{k=1}^{n-i}B_{n-i,k}\frac{A_\varepsilon(4r_\varepsilon)^{n-i}}{(1-2\sqrt\varepsilon)\lambda}\left(\log\frac{A_\varepsilon(2r_\varepsilon)^{n-i}}{(1-2\sqrt\varepsilon)\lambda}\right)^{n-i-k}\Bigg] \\
		&\quad + \sum_{i=1}^{n-1}n!(2R_\varepsilon)^i(-2)^{n-i}(R_\varepsilon+r_\varepsilon)^{n-i} +(2R_\varepsilon)^n.
	\end{aligned}
\end{equation}

Now it follows from (\ref{eq222}), (\ref{eq22s1}), (\ref{eq22s31}) and (\ref{eq22s33}) that
\begin{align*}
	|E_\lambda| &= |E_{(1-2\sqrt\varepsilon)\lambda}^1 \cap E'| + |E_{(1-2\sqrt\varepsilon)\lambda}^1 \cap (\rn\backslash E')| + |E_{\sqrt\varepsilon\lambda}^2| + |E_{\sqrt\varepsilon\lambda}^3| \\
	&\le 2^n\sum_{k=1}^nB_{n,k}\frac{\|f_1\|_{L^1(\rn)}}{(1-2\sqrt\varepsilon)\lambda}\left(\log\frac{\|f_1\|_{L^1(\rn)}}{(1-2\sqrt\varepsilon)\lambda}\right)^{n-k} + (-2)^n(R_\varepsilon+r_\varepsilon)^n \\
	&\quad + \sum_{i=1}^{n-1}n!(2R_\varepsilon)^i\cdot\Bigg[\sum_{k=1}^{n-i}B_{n-i,k}\frac{A_\varepsilon(4r_\varepsilon)^{n-i}}{(1-2\sqrt\varepsilon)\lambda}\left(\log\frac{A_\varepsilon(2r_\varepsilon)^{n-i}}{(1-2\sqrt\varepsilon)\lambda}\right)^{n-i-k}\Bigg] \\
	&\quad + \sum_{i=1}^{n-1}n!(2R_\varepsilon)^i(-2)^{n-i}(R_\varepsilon+r_\varepsilon)^{n-i} + (2R_\varepsilon)^n + 2C_n\frac{1+(\log^+\frac1{\sqrt\varepsilon\lambda})^{n-1}}\lambda\sqrt\varepsilon .
\end{align*}
Multipling $\lambda/(1+(\log^+\frac1\lambda)^{n-1})$ on both sides and let $\lambda\to0^+$, we conclude that
\begin{align*}
	\varlimsup_{\lambda\to0^+}\frac\lambda{1+(\log^+\frac1\lambda)^{n-1}}|E_\lambda| &\le \frac{2^nB_{n,1}}{1-2\sqrt\varepsilon}\|f_1\|_{L^1(\rn)} + 2C_n\sqrt\varepsilon \\
	&\le \frac{2^n}{(n-1)!(1-2\sqrt\varepsilon)}\left(\|f\|_{L^1(\rn)}+2\varepsilon\right) + 2C_n\sqrt\varepsilon.
\end{align*}
Since $\varepsilon$ is arbitrary, it holds that
\begin{equation}\label{eq22s34}
	\varlimsup_{\lambda\to0^+}\frac\lambda{1+(\log^+\frac1\lambda)^{n-1}}|E_\lambda| \le \frac{2^n}{(n-1)!}\|f\|_{L^1(\rn)}.
\end{equation}

Finally, combining (\ref{eq22s2}) and (\ref{eq22s34}), we deduce that
$$\lim_{\lambda\to0^+}\frac\lambda{1+(\log^+\frac1\lambda)^{n-1}}|E_\lambda| = \frac{2^n}{(n-1)!}\|f\|_{L^1(\rn)}.$$
Then we finish the proof of Theorem \ref{thm1} (i).

\subsection{Proof of Theorem \ref{thm1} (ii)}
\ 
\newline
\indent Since $\m_n$ is bounded from $L^\infty$ to $L^\infty$, and apparently the best constant is $1$, then for all $\lambda>A_\varepsilon/(1-2\sqrt\varepsilon)$, it is easy to see
$$|E_{(1-2\sqrt\varepsilon)\lambda}^1| = 0.$$

Therefore for $\lambda>\max\lb A_\varepsilon/(1-2\sqrt\varepsilon),1/\sqrt\varepsilon\rb$, it follows from (\ref{eq33}), (\ref{eq34}) that 
$$|E_\lambda| \le |E_{(1-2\sqrt\varepsilon)\lambda}^1| + \sum_{i=2}^3|E_{\sqrt\varepsilon}^i| \le 2C_n\frac{1+(\log^+\frac1{\sqrt\varepsilon\lambda})^{n-1}}{\lambda}\sqrt\varepsilon \le 2C_n\frac{\sqrt\varepsilon}\lambda.$$
Multipling $\lambda/(1+(\log^+\frac1\lambda)^{n-1})$ on both sides and let $\lambda\to\infty$, we have
$$\varlimsup_{\lambda\to0^+}\frac\lambda{1+(\log^+\frac1\lambda)^{n-1}}|E_\lambda| \le 2C_n\sqrt\varepsilon.$$
By the arbitrariness of $\varepsilon$, it yields that
$$\lim_{\lambda\to\infty}\frac\lambda{1+(\log^+\frac1\lambda)^{n-1}}|E_\lambda| = 0.$$
This completes the proof of Theorem \ref{thm1} (ii).

\subsection{Proof of Corollary \ref{cor1}}
\ 
\newline
\indent Now we are ready to prove Corollary \ref{cor1}. Since the family of functions satisfying $f\in L_{\Phi_n}(\rn)$ and $\|f\|_{L_{\Phi_n}(\rn)} = \|f\|_{L^1(\rn)}$ is nonempty, therefore
\begin{align*}
	\|\m_n\|_{L_{\Phi_n}(\rn)\to L_{\Phi_n}^{1,\infty}(\rn)} &= \sup_{f\in L_{\Phi_n}(\rn)}\frac{\|\m_nf\|_{L_{\Phi_n}^{1,\infty}(\rn)}}{\|f\|_{L_{\Phi_n}(\rn)}} \\
	&\ge \sup_{\substack{f\in L_{\Phi_n}(\rn) \\ \|f\|_{L_{\Phi_n}(\rn)} = \|f\|_{L^1(\rn)}}}\frac{\|\m_nf\|_{L_{\Phi_n}^{1,\infty}(\rn)}}{\|f\|_{L^1(\rn)}} \ge \frac{2^n}{(n-1)!},
\end{align*}
where the last inequality is a direct consequence of Theorem \ref{thm1} (i).

On the other hand, note that
$$\lim_{\lambda\to1^-}\frac\lambda{1+(\log^+\frac1\lambda)^{n-1}}|\lb x\in\rn:\m_n(\chi_{B(0,1)})(x)>\lambda\rb|=\|\chi_{B(0,1)}\|_{L_{\Phi_n}(\rn)},$$
then it follows that
$$\|\m_n\|_{L_{\Phi_n}(\rn)\to L_{\Phi_n}^{1,\infty}(\rn)} \ge \max\lb\frac{2^n}{(n-1)!},1\rb.$$

It is easy to verify that
$$\lim_{\lambda\to1^-}\frac\lambda{1+(\log^+\frac1\lambda)^{n-1}}|\lb x\in\rn:\m_n^c(\chi_{B(0,1)})(x)>\lambda\rb|=\|\chi_{B(0,1)}\|_{L_{\Phi_n}(\rn)},$$
which indicates that
$$\|\m_n^c\|_{L_{\Phi_n}(\rn)\to L_{\Phi_n}^{1,\infty}(\rn)} \ge1.$$


\section{Results for multilinear strong maximal operators}\label{sec3}

As a natural generalization of linear case, the multilinear strong maximal operator have been paid lots of attentions. It was first introduced by Grafakos et al. in \cite{GLPT2011}:
$$\m_n^{(m)}(f_1,\cdots,f_m)(x) = \sup_{\substack{R \ni x \\ R\in\mathcal{R}}}\prod_{i=1}^m\frac1{|R|}\int_R|f_i(y)|dy.$$
The strong boundedness, endpoint weak type boundedness and weighted boundedness has been given. Subsequently, similar results was extented to multilinear fractional strong maximal operator by Cao et al. \cite{CXY2017,CXY2018,CXY2019}. For more works about $\m_n^{(m)}$, we refer the readers to \cite{LXY2020,ZSX2019,ZX2020}.

It is quiet natural to ask the following question:

\noindent{\bf Question}: what kinds of limiting weak type behavior does the multilinear strong maximal operator enjoy?

In this section, we are devoted to study this question. Since the difference between $m$-linear case and bilinear case is not essential, we only demonstrate the bilinear case.

\begin{thm}\label{thm3}
Let $f,g\in L_{\Phi_n}(\rn)$, then we have
\begin{equation}\label{eq31}
	\begin{aligned}
		\lim_{\lambda\to0^+}\frac\lambda{1+(\log^+\frac1\lambda)^{n-1}}| & \lbrace x\in\rn:\m_n^{(2)}(f,g)(x)>\lambda^2\rbrace| \\
		&= \frac{2^n}{(n-1)!}(\|f\|_{L^1(\rn)}\|g\|_{L^1(\rn)})^{1/2};
	\end{aligned}
\end{equation}
and
\begin{equation}\label{eq32}
	\lim_{\lambda\to\infty}\frac\lambda{1+(\log^+\frac1\lambda)^{n-1}}|\lbrace x\in\rn:\m_n^{(2)}(f,g)(x)>\lambda^2\rbrace|=0.
\end{equation}
\end{thm}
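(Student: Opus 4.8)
The plan is to reduce the bilinear statement to the linear result (Theorem \ref{thm1}) rather than redo all the approximation machinery. The key observation is a pointwise two-sided comparison between $\m_n^{(2)}(f,g)$ and the geometric mean of two copies of the linear operator. On the one hand, by definition $\m_n^{(2)}(f,g)(x) = \sup_{R \ni x}\big(\frac{1}{|R|}\int_R |f|\big)\big(\frac{1}{|R|}\int_R |g|\big) \le \m_n(f)(x)\,\m_n(g)(x)$, so $\{\m_n^{(2)}(f,g) > \lambda^2\} \subset \{\m_n(f)\,\m_n(g) > \lambda^2\} \subset \{\m_n(f) > \lambda\} \cup \{\m_n(g) > \lambda\}$. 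For the lower bound one wants the reverse: on the ``deep corner'' region $E_1' = \{x_1,\dots,x_n > R_\varepsilon\}$ used in Step 2 of Theorem \ref{thm1}, Claim 1 identifies the optimal rectangle exactly, and the \emph{same} rectangle $[-r_\varepsilon,r_\varepsilon]^n \times \cdots$ (anchored at $(-r_\varepsilon,\dots,-r_\varepsilon)$ with far corner $x$) is simultaneously optimal for $f_1$ and for $g_1$ after the analogous truncation/regularization of $g$. Hence on that region $\m_n^{(2)}(f_1,g_1)(x) = \big(\prod_k (x_k+r_\varepsilon)\big)^{-2}\|f_1\|_{L^1}\|g_1\|_{L^1}$, i.e. $\m_n^{(2)}(f_1,g_1)(x)^{1/2} = \big(\prod_k(x_k+r_\varepsilon)\big)^{-1}(\|f_1\|_{L^1}\|g_1\|_{L^1})^{1/2}$, which is \emph{exactly} the form handled by Lemma \ref{lem21} with $c = (\|f_1\|_{L^1}\|g_1\|_{L^1})^{1/2}/\lambda$.

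First I would set up the approximation exactly as in the proof of Theorem \ref{thm1}: choose $r_\varepsilon$, regularize $f$ as $f_1,f_2,f_3,f_4$ and $g$ as $g_1,g_2,g_3,g_4$ with the same properties ($f_1,g_1$ continuous and bounded below by $\varepsilon/(2r_\varepsilon)^n$ on $[-r_\varepsilon,r_\varepsilon]^n$, the error pieces small in $L_{\Phi_n}$). Then I would record the sandwich
\[
\m_n^{(2)}(f_1,g_1)(x) \big(1 - C\sqrt\varepsilon\big) - (\text{error terms}) \le \m_n^{(2)}(f,g)(x) \le \m_n^{(2)}(f_1,g_1)(x)\big(1 + C\sqrt\varepsilon\big) + (\text{error terms}),
\]
obtained by expanding $\prod_{i}\frac1{|R|}\int_R|f_i + (\text{errors})|$ and using $\m_n^{(2)}(u,v) \le \m_n(u)\m_n(v)$ together with subadditivity of each $\m_n$; the error contributions are controlled through the level sets $E^i_{\sqrt\varepsilon\lambda}$ exactly by \eqref{eq22s1}, since the relevant distributional bound $|\{\m_n^{(2)}(u,v) > \lambda^2\}| \le |\{\m_n(u) > \lambda\}| + |\{\m_n(v) > \lambda\}|$ reduces to \eqref{eq21}. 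This gives analogues of the inclusions \eqref{eq222} for the sets $\{\m_n^{(2)}(f,g) > \lambda^2\}$.

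Next, for the lower estimate I would run Step 2 verbatim: on each of the $2^n$ corner regions $E_i'$, Claim 1 applied to both $f_1$ and $g_1$ shows the maximizing rectangle is shared, so $\m_n^{(2)}(f_1,g_1)^{1/2}$ has the product form above, and Lemma \ref{lem21} with $c = (\|f_1\|_{L^1}\|g_1\|_{L^1})^{1/2}/((1\pm c\sqrt\varepsilon)\lambda)$ gives the measure, whose leading term after multiplying by $\lambda/(1+(\log^+\tfrac1\lambda)^{n-1})$ and letting $\lambda\to0^+$ is $\frac{2^n}{(n-1)!}(\|f_1\|_{L^1}\|g_1\|_{L^1})^{1/2}$. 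For the upper estimate I would mimic Step 3: on $\rn \setminus E'$, dominate $f_1 \le h$ and $g_1 \le h'$ by the obvious box-shaped majorants, apply Claim 2 to each, and again get a product bound on $\m_n^{(2)}(f_1,g_1)^{1/2}$ amenable to Lemma \ref{lem21} with lower-dimensional exponents, contributing only $o\big(\lambda^{-1}(1+(\log^+\tfrac1\lambda)^{n-1})\big)$-order terms after rescaling. Letting $\varepsilon \to 0$ (using $\|f_1\|_{L^1} \to \|f\|_{L^1}$, $\|g_1\|_{L^1}\to\|g\|_{L^1}$) yields \eqref{eq31}. For \eqref{eq32}, use $\m_n^{(2)}(f_1,g_1) \le \|f_1\|_\infty\|g_1\|_\infty$ so the main level set is empty for large $\lambda$, and the error terms again vanish, exactly as in the proof of Theorem \ref{thm1}(ii).

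The main obstacle is the \textbf{joint optimality} claim: that one rectangle simultaneously maximizes the averages of $f_1$ and of $g_1$ on the corner regions. This works precisely because $f_1$ and $g_1$ share the support $[-r_\varepsilon,r_\varepsilon]^n$ and the monotonicity arguments in Claim 1 and Claim 2 depend only on the support and on crude upper/lower bounds, not on the fine shape of the function — so the derivative-sign computations go through for $g_1$ with the constant $A_\varepsilon$ replaced by the analogous $A'_\varepsilon$ for $g_1$. I would state this as a short lemma ("if $u,v$ are supported in $[-r_\varepsilon,r_\varepsilon]^n$, continuous and bounded below by $\varepsilon/(2r_\varepsilon)^n$ there, then for $x \in E_1'$ both $\m_n u(x)$ and $\m_n v(x)$, hence $\m_n^{(2)}(u,v)(x)$, are attained at the rectangle with corners $(-r_\varepsilon,\dots,-r_\varepsilon)$ and $x$") and cite the already-proven Claims 1 and 2. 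Everything else is a bookkeeping variant of Section \ref{sec2}, which is why the authors say the general $m$-linear case is "not essential" and only the bilinear case need be displayed.
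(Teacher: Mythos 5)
Your proposal is correct and follows essentially the same route as the paper: the same approximation $f\mapsto f_1,\dots,f_4$, $g\mapsto g_1,\dots,g_4$, control of all cross/error terms through $\m_n^{(2)}(u,v)\le \m_n(u)\m_n(v)$ and the linear weak bound \eqref{eq21}, and — as the key step — the observation that on each corner region $E_i'$ the rectangle from Claim 1 simultaneously maximizes the averages of $f_1$ and $g_1$ (with a common $A_\varepsilon$), so that $\m_n^{(2)}(f_1,g_1)^{1/2}$ takes the exact product form to which Lemma \ref{lem21} applies with $c=(\|f_1\|_{L^1}\|g_1\|_{L^1})^{1/2}/((1\pm 2\sqrt\varepsilon)^{1/2}\lambda)$. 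The only cosmetic difference is that on $\rn\setminus E'$ you re-run the majorant argument of Claim 2 for both functions, whereas the paper simply reduces that region to the already-proved linear Step 3 via the product bound; both give the same $o(\lambda^{-1}(1+(\log^+\frac1\lambda)^{n-1}))$ contribution.
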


\begin{proof}
The notations in Section \ref{sec3} will continue to be used in this proof. We may still assume $\|f\|_{L^1(\rn)},\|g\|_{L^1(\rn)}>0$. There also exist functions $g_1,g_2,g_3,g_4$ for $g$ similarly as $f_1,f_2,f_3,f_4$ for $f$. We may assume $\max\limits_{y\in[-r_\varepsilon,r_\varepsilon]^n}g_1(y) \le A_\varepsilon$, otherwise we can take $A_\varepsilon = \max\limits_{y\in[-r_\varepsilon,r_\varepsilon]^n}g_1(y)$.

By the sublinearity of $\m_n^{(2)}$, it is easy to see
\begin{align*}
	\m_n^{(2)}(f,g)(x) \le \m_n^{(2)}(f_1,g_1)(x) &+ \sum_{i=2}^3\left(\m_n^{(2)}(f_1,g_i)(x) + \m_n^{(2)}(f_i,g_1)(x)\right) \\
	&+ \sum_{i=2}^3\sum_{j=2}^3\m_n^{(2)}(f_i,g_j)(x)
\end{align*}
and
\begin{align*}
	\m_n^{(2)}(f,g)(x) \ge \m_n^{(2)}(f_1,g_1)(x) &- \sum_{i=3}^4\left(\m_n^{(2)}(f,g_i)(x) + \m_n^{(2)}(f_i,g)(x)\right) \\
	&- \sum_{i=3}^4\sum_{j=3}^4\m_n^{(2)}(f_i,g_j)(x).
\end{align*}

So we define
\begin{align*}
	& \widetilde E_\lambda = \lbrace x\in\rn:\m_n^{(2)}(f,g)(x) > \lambda\rbrace; \\
	& \widetilde E_\lambda^1 = \lbrace x\in\rn:\m_n^{(2)}(f_1,g_1) > \lambda\rbrace; \\
	& \widetilde E_\lambda^2 = \bigg\lbrace x\in\rn:\sum_{i=2}^3\left(\m_n^{(2)}(f_1,g_i)(x) + \m_n^{(2)}(f_i,g_1)(x)\right) > \lambda\bigg\rbrace; \\
	& \widetilde E_\lambda^3 = \bigg\lbrace x\in\rn:\sum_{i=3}^4\left(\m_n^{(2)}(f,g_i)(x) + \m_n^{(2)}(f_i,g)(x)\right) > \lambda\bigg\rbrace; \\
	& \widetilde E_\lambda^4 = \bigg\lbrace x\in\rn:\sum_{i=2}^4\sum_{j=2}^4\m_n^{(2)}(f_i,g_j)(x) >\lambda\bigg\rbrace.
\end{align*}
Therefore the following inlcuding relationships hold:
\begin{equation}\label{eq33}
	\widetilde E_{(1+2\sqrt\varepsilon)\lambda^2}^1 \backslash (\widetilde E_{\sqrt\varepsilon\lambda^2}^3 \cup \widetilde E_{\sqrt\varepsilon\lambda^2}^4) \subset \widetilde E_{\lambda^2} \subset \widetilde E_{(1-2\sqrt\varepsilon)\lambda^2}^1 \cup \widetilde E_{\sqrt\varepsilon\lambda^2}^2 \cup \widetilde E_{\sqrt\varepsilon\lambda^2}^4
\end{equation}

We also divide this proof into four parts. The upper estimates for $\widetilde E_{\sqrt\varepsilon\lambda^2}^2$, $\widetilde E_{\sqrt\varepsilon\lambda^2}^3$ and $\widetilde E_{\sqrt\varepsilon\lambda^2}^4$ will be given in Step 1 and Step 2. Step 3 and Step 4 are devoted to demonstrate the lower and upper estimates of $\widetilde E_{(1+2\sqrt\varepsilon)\lambda^2}^1$ and $\widetilde E_{(1-2\sqrt\varepsilon)\lambda^2}^1$.

\vspace{0.3cm}
\noindent{\bf Step 1: Upper estimate for $\widetilde E_{\sqrt\varepsilon\lambda^2}^4$.}

A basic fact $\m_n^{(2)}(f_i,g_i)(x) \le \m_n(f_i)(x)\cdot\m_n(g_i)(x)$ yields that
\begin{align*}
	\widetilde E_{\sqrt\varepsilon\lambda^2}^4 &\subset \bigcup_{i=2}^4\bigcup_{j=2}^4\lb x\in\rn:\m_n^{(2)}(f_i,g_j)>\frac{\sqrt\varepsilon\lambda^2}9\rb \\
	&\subset \bigcup_{i=2}^4\bigcup_{j=2}^4\left(\lb x\in\rn:\m_n(f_i)(x)>\frac{\varepsilon^{1/4}\lambda}3\rb \cup \lb x\in\rn:\m_n(g_j)(x)>\frac{\varepsilon^{1/4}\lambda}3\rb\right) \\
	&= \bigcup_{i=2}^4\left(\lb x\in\rn:\m_n(f_i)(x)>\frac{\varepsilon^{1/4}\lambda}3\rb \cup \lb x\in\rn:\m_n(g_i)(x)>\frac{\varepsilon^{1/4}\lambda}3\rb\right).
\end{align*}
Recall that for $i=2,3,4$, $\|f_i\|_{L_{\Phi_n}(\rn)},\|g_i\|_{L_{\Phi_n}(\rn)} \le \varepsilon$. Thus it follows from (\ref{eq21}) that
\begin{equation}\label{eq34}
	\begin{aligned}
		& |\widetilde E_{\sqrt\varepsilon\lambda^2}^4| \\
		&\le \sum_{i=2}^4\left(\left|\lb x\in\rn:\m_n(f_i)(x)>\frac{\varepsilon^{1/4}\lambda}3\rb\right| + \left|\lb x\in\rn:\m_n(g_i)(x)>\frac{\varepsilon^{1/4}\lambda}3\rb\right|\right) \\
		&\le 6C_n\frac{1+(\log^+\frac3{\varepsilon^{1/4}\lambda})^{n-1}}{\varepsilon^{1/4}\lambda/3}\varepsilon \le 18C_n\frac{1+(\log^+\frac3{\varepsilon^{1/4}\lambda})^{n-1}}\lambda\varepsilon^{3/4} \\
		&\le 18C_n\frac{1+(\log^+\frac3{\varepsilon^{3/4}\lambda})^{n-1}}\lambda\varepsilon^{1/4}.
	\end{aligned}
\end{equation}
So we get the upper estimate for $\widetilde E_{\sqrt\varepsilon\lambda^2}^4$.

\vspace{0.3cm}
\noindent{\bf Step 2: Upper estimates for $\widetilde E_{\sqrt\varepsilon\lambda^2}^2$ and $\widetilde E_{\sqrt\varepsilon\lambda^2}^3$.}

Since $f_1$ is controlled by $|f|+f_3+f_4$, consequently, it holds that
\begin{align*}
	\|f_1\|_{L_{\Phi_n}(\rn)}
	&\le \int_\rn\Phi_n(|f(y)|+f_3(y)+f_4(y))dy \\
	&\le \int_{|f|=\max\lb|f|,f_3,f_4\rb}\Phi_n(3|f(y)|)dy + \sum_{i=3}^4\int_{|f_i|=\max\lb|f|,f_3,f_4\rb}\Phi_n(3|f_i(y)|)dy.
\end{align*}
The same reasoning as in the beginning of Section \ref{sec3} yields that
\begin{align*}
	\|f_1\|_{L_{\Phi_n}(\rn)} &\le 2^n3(1+(\log3)^{n-1})\left(\|f\|_{L_{\Phi_n}(\rn)} + \sum_{i=3}^4\|f_i\|_{L_{\Phi_n}(\rn)}\right) \\
	&\le 2^{2n+2}\left(\|f\|_{L_{\Phi_n}(\rn)} + 2\varepsilon\right)
	\le 2^{2n+3}\|f\|_{L_{\Phi_n}(\rn)},
\end{align*}
where the last inequality follows from $0<\varepsilon\ll\|f\|_{L^1(\rn)}$. Similarly inequality also holds for $g_1$.

It is easy to see that
\begin{align*}
	\widetilde E_{\sqrt\varepsilon\lambda^2}^2
	&\subset \bigcup_{i=2}^3\bigg(\bigg\lbrace x\in\rn:\m_n^{(2)}(f_1,g_i)(x)>\frac{\sqrt\varepsilon\lambda^2}4\bigg\rbrace \cup \bigg\lbrace x\in\rn:\m_n^{(2)}(f_i,g_1)(x)>\frac{\sqrt\varepsilon\lambda^2}4\bigg\rbrace\bigg) \\
	&\subset \bigcup_{i=2}^3\Bigg(\bigg\lbrace x\in\rn:\m_n(f_1)(x)>\frac\lambda{2\varepsilon^{1/4}}\bigg\rbrace \cup \bigg\lbrace x\in\rn:\m_n(g_i)(x)>\frac{\varepsilon^{3/4}\lambda}2\bigg\rbrace \\
	&\qquad\qquad \cup \bigg\lbrace x\in\rn:\m_n(f_i)(x)>\frac{\varepsilon^{3/4}\lambda}2\bigg\rbrace \cup \bigg\lbrace x\in\rn:\m_n(g_1)(x)>\frac\lambda{2\varepsilon^{1/4}}\bigg\rbrace\Bigg) \\
	&= \lb x\in\rn:\m_n(f_1)(x)>\frac\lambda{2\varepsilon^{1/4}}\rb \cup \lb x\in\rn:\m_n(g_1)(x)>\frac\lambda{2\varepsilon^{1/4}}\rb \\
	&\qquad \cup \bigcup_{i=2}^3\bigg(\bigg\lbrace x\in\rn:\m_n(f_i)(x)>\frac{\varepsilon^{3/4}\lambda}2\bigg\rbrace \cup \bigg\lbrace x\in\rn:\m_n(g_i)(x)>\frac{\varepsilon^{3/4}\lambda}2\bigg\rbrace\bigg).
\end{align*}
Therefore by Lemma \ref{lem21} we can get the upper estimate for $\widetilde E_{\sqrt\varepsilon\lambda^2}^2$:
\begin{equation}\label{eq35}
	\begin{aligned}
		|\widetilde E_{\sqrt\varepsilon\lambda^2}^2|
		&\le 2C_n(\|f_1\|_{L_{\Phi_n}(\rn)} + \|g_1\|_{L_{\Phi_n}(\rn)})\frac{1+(\log^+\frac{2\varepsilon^{1/4}}\lambda)^{n-1}}\lambda\varepsilon^{1/4} \\
		&\quad + 8C_n\frac{1+(\log^+\frac2{\varepsilon^{3/4}\lambda})^{n-1}}\lambda\varepsilon^{1/4} \\
		&\le (2^{2n+4}\widetilde C_n+8C_n)\frac{1+(\log^+\frac3{\varepsilon^{3/4}\lambda})^{n-1}}\lambda\varepsilon^{1/4},
	\end{aligned}
\end{equation}
where $\widetilde C_n = C_n\left(\|f\|_{L_{\Phi_n}(\rn)} + \|g\|_{L_{\Phi_n}(\rn)}\right)$.

Applying the same method, we can also have the upper estimate for $\widetilde E_{\sqrt\varepsilon\lambda}^3$:
\begin{equation}\label{eq36}
	|\widetilde E_{\sqrt\varepsilon\lambda^2}^3| \le (2\widetilde C_n+8C_n)\frac{1+(\log^+\frac3{\varepsilon^{3/4}\lambda})^{n-1}}\lambda\varepsilon^{1/4}.
\end{equation}

\vspace{0.3cm}
\noindent{\bf Step 3: Lower estimate for $\widetilde E_{(1+2\sqrt\varepsilon)\lambda^2}^1$.}

Define $G(\vec a,\vec b,x)$ by
$$G(\vec a,\vec b,x) = \frac1{\prod\limits_{k=1}^n(b_k-a_k)}\int_{a_1}^{b_1}\cdots\int_{a_n}^{b_n}g_1(y)dy.$$

Since for $x\in E_1'$, it holds that
$$\max_{\vec a,\vec b}F(\vec a,\vec b,x) = F((-r_\varepsilon,\cdots,-r_\varepsilon),x,x),$$
$$\max_{\vec a,\vec b}G(\vec a,\vec b,x) = G((-r_\varepsilon,\cdots,-r_\varepsilon),x,x),$$
thus we have
\begin{align*}
	\m_n^{(2)}(f_1,g_1)(x) &= \sup_{\vec a,\vec b}F(\vec a,\vec b,x)G(\vec a,\vec b,x) \\
	&= \frac1{\prod\limits_{k=1}^n(x_k+r_\varepsilon)^2}\int_{[-r_\varepsilon,r_\varepsilon]^n}f_1(y)dy\int_{[-r_\varepsilon,r_\varepsilon]^n}g_1(y)dy.
\end{align*}
This implies that for $\lambda$ small enough, we obtain
\begin{align*}
	& |\widetilde E_{(1+2\sqrt\varepsilon)\lambda^2}^1 \cap E_1'|
	= \left|\lb x:x_1,\cdots,x_n>R_\varepsilon,\prod_{k=1}^n(x_k+r_\varepsilon)<\frac{\|f_1\|_{L^1(\rn)}^{1/2}\|g_1\|_{L^1(\rn)}^{1/2}}{\sqrt{1+2\sqrt\varepsilon}\lambda}\rb\right| \\
	&= \sum_{k=1}^nB_{n,k}\frac{\|f_1\|_{L^1(\rn)}^{1/2}\|g_1\|_{L^1(\rn)}^{1/2}}{(n-1)!\sqrt{1+2\sqrt\varepsilon}\lambda}\left(\log\frac{\|f_1\|_{L^1(\rn)}^{1/2}\|g_1\|_{L^1(\rn)}^{1/2}}{\sqrt{1+2\sqrt\varepsilon}\lambda}\right)^{n-k}+(-1)^n(R_\varepsilon+r_\varepsilon)^n.
\end{align*}
So does $|\widetilde E_{(1+2\sqrt\varepsilon)\lambda^2}^1 \cap E_i'|$ for $i=2,\cdots,2^n$.

Combining these with (\ref{eq33}), (\ref{eq34}) and (\ref{eq36}) yields that
\begin{align*}
	|\widetilde E_{\lambda^2}|
	&\ge 2^n\sum_{k=1}^nB_{n,k}\frac{\|f_1\|_{L^1(\rn)}^{1/2}\|g_1\|_{L^1(\rn)}^{1/2}}{(n-1)!\sqrt{1+2\sqrt\varepsilon}\lambda}\left(\log\frac{\|f_1\|_{L^1(\rn)}^{1/2}\|g_1\|_{L^1(\rn)}^{1/2}}{\sqrt{1+2\sqrt\varepsilon}\lambda}\right)^{n-k} \\
	&\quad + (-1)^n(R_\varepsilon+r_\varepsilon)^n - (2\widetilde C_n + 26C_n)\frac{1+(\log^+\frac3{\varepsilon^{3/4}\lambda})^{n-1}}\lambda\varepsilon^{1/4}.
\end{align*}
Multipling $\lambda/(1+(\log^+\frac1\lambda)^{n-1})$ on both sides and let $\lambda\to0^+$ we deduce that
\begin{align*}
	& \varliminf_{\lambda\to0^+}\frac\lambda{1+(\log^+\frac1\lambda)^{n-1}}|\widetilde E_{\lambda^2}| \ge \frac{2^nB_{n,1}\|f_1\|_{L^1(\rn)}^{1/2}\|g_1\|_{L^1(\rn)}^{1/2}}{\sqrt{1+2\sqrt\varepsilon}} - (2\widetilde C_n+26C_n)\varepsilon^{1/4} \\
	&\ge \frac{2^n}{(n-1)!\sqrt{1+2\sqrt\varepsilon}}\left(\|f\|_{L^1(\rn)}-2\varepsilon\right)^{1/2}\left(\|g\|_{L^1(\rn)}-2\varepsilon\right)^{1/2} - (2\widetilde C_n+26C_n)\varepsilon^{1/4}.
\end{align*}
By the arbitrariness of $\varepsilon$, we get the lower estimate as follows:
\begin{equation}\label{eq37}
	\varliminf_{\lambda\to0^+}\frac\lambda{1+(\log^+\frac1\lambda)^{n-1}}|\widetilde E_{\lambda^2}| \ge \frac{2^n}{(n-1)!}\|f\|_{L^1(\rn)}^{1/2}\|g\|_{L^1(\rn)}^{1/2}.
\end{equation}

\vspace{0.3cm}
\noindent{\bf Step 4: Upper estimate for $\widetilde E_{(1-2\sqrt\varepsilon)\lambda^2}^1$.}

The same arguments as in Step 3 of Section \ref{sec3} imply that
\begin{equation}\label{eq38}
	\begin{aligned}
		& |\widetilde E_{(1-2\sqrt\varepsilon)\lambda^2}^1 \cap E'| \\
		&= 2^n\sum_{k=1}^nB_{n,k}\frac{\|f_1\|_{L^1(\rn)}^{1/2}\|g_1\|_{L^1(\rn)}^{1/2}}{(n-1)!\sqrt{1-2\sqrt\varepsilon}\lambda}\left(\log\frac{\|f_1\|_{L^1(\rn)}^{1/2}\|g_1\|_{L^1(\rn)}^{1/2}}{\sqrt{1-2\sqrt\varepsilon}\lambda}\right)^{n-1} \\
		&\quad + (-1)^n(R_\varepsilon+r_\varepsilon)^n.
	\end{aligned}
\end{equation}

It is easy to verify that
\begin{align*}
	& |\widetilde E_{(1-2\sqrt\varepsilon)\lambda^2}^1 \cap (\rn\backslash E')| \le |E_{(1-2\sqrt\varepsilon)\lambda}^1 \cap (\rn\backslash E')| + |\lb x\in\rn:\m_n(g_1)(x)>\lambda\rb \cap (\rn\backslash E')| \\
	&\le |E_{(1-2\sqrt\varepsilon)\lambda}^1 \cap (\rn\backslash E')| + |\lb x\in\rn:\m_n(g_1)(x)>(1-2\sqrt\varepsilon)\lambda\rb \cap (\rn\backslash E')|.
\end{align*}
From Step 3 of Section \ref{sec2} we know that the right side multiply $\lambda/(1+(\log^+\frac1\lambda)^{n-1})$ will converge to $0$ as $\lambda\to0^+$. Then, by (\ref{eq33}), (\ref{eq34}), (\ref{eq35}) and (\ref{eq38}), we have
\begin{align*}
	& \varlimsup_{\lambda\to0^+}\frac\lambda{1+(\log^+\frac1\lambda)^{n-1}}|\widetilde E_{\lambda^2}| \le \frac{2^nB_{n,1}\|f_1\|_{L^1(\rn)}^{1/2}\|g_1\|_{L^1(\rn)}^{1/2}}{\sqrt{1+2\sqrt\varepsilon}} + (2^{2n+4}\widetilde C_n+26C_n)\varepsilon^{1/4} \\
	&\le \frac{2^n}{(n-1)!\sqrt{1+2\sqrt\varepsilon}}\left(\|f\|_{L^1(\rn)}+2\varepsilon\right)^{1/2}\left(\|g\|_{L^1(\rn)}+2\varepsilon\right)^{1/2} + (2^{2n+4}\widetilde C_n+26C_n)\varepsilon^{1/4}.
\end{align*}
The lower estimate follows from the arbitrariness of $\varepsilon$:
\begin{equation}\label{eq39}
	\varlimsup_{\lambda\to0^+}\frac\lambda{1+(\log^+\frac1\lambda)^{n-1}}|\widetilde E_{\lambda^2}| \le \frac{2^n}{(n-1)!}\|f\|_{L^1(\rn)}^{1/2}\|g\|_{L^1(\rn)}^{1/2}.
\end{equation}

Combining (\ref{eq37}) and (\ref{eq39}), we deduce that
$$\lim_{\lambda\to0^+}\frac\lambda{1+(\log^+\frac1\lambda)^{n-1}}|\widetilde E_{\lambda^2}| = \frac{2^n}{(n-1)!}\|f\|_{L^1(\rn)}^{1/2}\|g\|_{L^1(\rn)}^{1/2}.$$
The proof of (\ref{eq31}) is finished.

\vspace{0.3cm}

Finally, by Theorem \ref{thm1} (ii), (\ref{eq32}) follows from
\begin{align*}
	0 &\le \lim_{\lambda\to\infty}\frac\lambda{1+(\log^+\frac1\lambda)^{n-1}}\left|\lb x:\m_n^{(2)}(f,g)(x)>\lambda^2\rb\right| \\
	&\le \lim_{\lambda\to\infty}\frac\lambda{1+(\log^+\frac1\lambda)^{n-1}}\left(\left|\lb x:\m_n(f)(x)>\lambda\rb\right| + \left|\lb x:\m_ng(x)>\lambda\rb\right|\right) =0.
\end{align*}

\end{proof}


\begin{thebibliography}{00}

     \bibitem{AS} A. Alfonseca, F. Soria, and A. Vargas,  {\it A remark on maximal operators along directions in $\mathbb {R}^2$}, Math. Res. Lett. {\bf 10} (2003), no. 1, 41-49.
	\bibitem{B1983} R. J. Bagby, {\it Maximal functions and rearrangements: some new proofs}, Indiana Univ. Math. J. {\bf 32} (1983), no. 6, 879-891.
	\bibitem{BK1984/85} R. J. Bagby, D. S. Kurtz, {\it $L(\log L)$ spaces and weights for the strong maximal function}, J. Analyse Math. {\bf 44} (1984/85), 21-31.
    	\bibitem{Bo} J. Bourgain, {\it On the $L^p$-bounds for maximal functions associated to convex bodies in $\rn$}, Israel J. Math. {\bf 54} (1986), no. 3, 257-265.

	\bibitem{CXY2017} M. Cao, Q. Xue, K. Yabuta, {\it On multilinear fractional strong maximal operator associated with rectangles and multiple weights}, Rev. Mat. Iberoam. {\bf 33} (2017), no. 2, 555-572.
	\bibitem{CXY2018} M. Cao, Q. Xue, K. Yabuta, {\it Corrigendum to "On multilinear fractional strong maximal operator associated with rectangles and multiple weights" [Rev. Mat. Iberoam. {\bf 33} (2017), no. 2, 555-572]}, Rev. Mat. Iberoam. {\bf 34} (2018), no. 1, 475-479.
	\bibitem{CXY2019} M. Cao, Q. Xue, K. Yabuta, {\it On the boundedness of multilinear fractional strong maximal operators with multiple weights}, Pacific J. Math. {\bf 303} (2019), no. 2, 491-518.
	
    \bibitem{Ch}M. Christ, {\it	The strong maximal function on a nilpotent group}, Trans. Amer. Math. Soc. {\bf 331} (1992), no. 1, 1–13. 
	\bibitem{CF1975} A. C\'ordoba, R. Fefferman, {\it A geometric proof of the strong maximal theorem}, Ann. Math. (2) {\bf 102} (1975), no. 1, 95–100.
		\bibitem{D1974} B. Davis, {\it On the weak type $(1, 1)$ inequality for conjugate functions}, Proc. Amer. Math. Soc. {\bf 44} (1974), 307-311.
	\bibitem{DL2017} Y. Ding, X. Lai, {\it $L^1$-Dini conditions and limiting behavior of weak type estimates for singular integrals}, Rev. Mat. Iberoam. {\bf 33} (2017), no. 4, 1267-1284.
	\bibitem{DL2017.} Y. Ding, X. Lai, {\it Weak type $ (1,1)$ behavior for the maximal operator with $L^1$-Dini kernel}, Potential Anal. {\bf 47} 2017, no. 2, 169-187.
	
	\bibitem{GK1998} L. Grafakos, J. Kinnunen, {\it Sharp inequalities for maximal functions associated with general measures}, Proc. Roy. Soc. Edinburgh Sect. A, {\bf 128} (1998), no. 4, 717-723.
	\bibitem{GLPT2011} L. Grafakos, L. Liu, C. P\'erez, R. H. Torres, {\it The multilinear strong maximal function}, J. Geom. Anal. {\bf 21} (2011), no. 1, 118-149.
	\bibitem{GHW} W. Guo, J. He, H. Wu, {\it Limiting weak-type behaviors for certain operators in harmonic analysis}, Potential Anal. (in press), doi.org/10.1007/s11118-020-09828-6.

	\bibitem{HL1930} G. H. Hardy, J. E. Littlewood, {\it A maximal theorem with function-theoretic applications}, Acta Math. {\bf 54} (1930), no. 1, 81-116.
	\bibitem{HGW2019} X. Hou, W. Guo, H. Wu, {\it Vector-valued estimates on limiting weak-type behaviors of singular integrals and maximal operators}, J. Math. Anal. Appl. {\bf 472} (2019), no. 2, 1293-1312.
	\bibitem{HW2019} X. Hou, H. Wu, {\it On the limiting weak-type behaviors for maximal operators associated with power weighted measure}, Canad. Math. Bull. {\bf 62} (2019), no. 2, 313-326.
	\bibitem{HW2019.} X. Hou, H. Wu, {\it Limiting weak-type behaviors for Riesz transforms and maximal operators in Bessel setting}, Front. Math. China, {\bf 14} (2019), no. 3, 535-550.
	\bibitem{HH2008} J. Hu, X. Huang, {\it A note on the limiting weak-type behavior for maximal operators}, Proc. Amer. Math. Soc. {\bf 136} (2008), no. 5, 1599-1607.
	
	\bibitem{J2004} P. Janakiraman, {\it Weak-type estimates for singular integrals and the Riesz transform}, Indiana U. Math. J. {\bf 53} (2004), no. 2, 533-555.
	\bibitem{J2006} P. Janakiraman, {\it Limiting weak-type behavior for singular integral and maximal operators}, Trans. Amer. Math. Soc. {\bf 358} (2006), no. 5, 1937-1952.
	\bibitem{JMZ1935} B. Jessen, J. Marcinkiewicz, A. Zygmund, {\it Note on the differentiability of multiple integrals}, Fund. Math. {\bf 25} (1935), 217–234.
	
	
	\bibitem{KA1} N. H. Katz, {\it A counterexample for maximal operators over a Cantor set of directions}, Math.
	Res. Lett. {\bf{3}} (1996), no. 4, 527-536.
	\bibitem{KA2}  N. H. Katz, {\it Maximal operators over arbitrary sets of directions}, Duke Math. J. 97 (1999), 67-79.

	
	\bibitem{LXY2020} F. Liu, Q. Xue, K. Yabuta, {\it Regularity and continuity of the multilinear strong maximal operators}, J. Math. Pures Appl. {\bf 138} (2020), no. 9, 204-241.
	\bibitem{LP2014} T. Luque, I. Parissis, {\it The endpoint Fefferman-Stein inequality for the strong maximal function}, J. Funct. Anal. {\bf 266} (2014), no. 1, 199-212.
	
	\bibitem{M2003} A. Melas, {\it The best constant for the centered Hardy-Littlewood maximal inequality}, Ann. of Math. {\bf 157} (2003), no. 2 647-688.
	\bibitem{M2006} T. Mitsis, {\it The weighted weak type inequality for the strong maximal function}, J. Fourier Anal. Appl. {\bf 12} (2006), no. 6, 645-652.
	\bibitem{NSW} A. Nagel, E. M. Stein, and S. Wainger, {\it Differentiation in lacunary directions}, Proc. Nat. Acad. Sci USA {\bf 75} (1978), 1060-1062.
	
	\bibitem{SS1983} E. M. Stein, J.-O. Str\"omberg, {\it Behavior of maximal functions in $\rn$ for large $n$}, Ark. Mat. {\bf 21} (1983), no. 2, 259-269.
	
	\bibitem{W1939} N. Wiener, {\it The ergodic theorem}, Duke Math. J. {\bf 5} (1939), no. 1, 1–18.
	
	\bibitem{ZSX2019} J. Zhang, H. Saito, Q. Xue, {\it The Fefferman-Stein type inequalities for the multilinear strong maximal functions}, Math. Inequal. Appl. {\bf 22} (2019), no. 2, 539-552.
	\bibitem{ZX2020} J. Zhang, Q. Xue, {\it Multilinear strong maximal operators on weighted mixed norm spaces}, Publ. Math. Debrecen, {\bf 96} (2020), no. 3-4, 347-361.
\end{thebibliography}
\end{document}